\numberwithin{equation}{section}
\theoremstyle{plain}
\newtheorem{thm}{Theorem}[section]
\newtheorem{cor}[thm]{Corollary}
\newtheorem{lem}[thm]{Lemma}
\theoremstyle{definition}
\newtheorem{rem}[thm]{Remark}
\newtheorem{exa}[thm]{Example}
\newtheorem*{rem*}{Remark}
\newcommand{\dd}{\mathrm{d}}
\newcommand{\ee}{\mathrm{e}}
\newcommand{\ii}{\mathrm{i}}
\newcommand{\To}{\longrightarrow}
\renewcommand{\Im}{\operatorname{Im}}
\renewcommand{\Re}{\operatorname{Re}}
\newcommand{\Z}{\mathbb{Z}}
\newcommand{\R}{\mathbb{R}}
\newcommand{\C}{\mathbb{C}}
\newcommand{\T}{\mathbb{T}}
\newcommand{\cA}{\mathcal{A}}
\newcommand{\eps}{\epsilon}
\newcommand{\veps}{\varepsilon}
\def\ds{\displaystyle}
\DeclareMathOperator{\sgn}{sgn}
\DeclareSymbolFont{extraup}{U}{zavm}{m}{n}
\DeclareMathSymbol{\varheart}{\mathalpha}{extraup}{86}
\DeclareMathSymbol{\vardiamond}{\mathalpha}{extraup}{87}
\title{Dispersion for Schr\"odinger operators on regular trees}
\author{Ka\"is Ammari}
\address{UR Analysis and Control of PDEs, UR13ES64, Department of Mathematics, Faculty of Sciences of
Monastir, University of Monastir, 5019 Monastir, Tunisia.}
\email{kais.ammari@fsm.rnu.tn}
\author{Mostafa Sabri}
\address{Department of Mathematics, Faculty of Science, Cairo University, Giza 12613, Egypt.}
\email{mmsabri@sci.cu.edu.eg}
\subjclass[2010]{34B45, 81Q20, 46N50}
\keywords{Dispersion, graphs, quantum graphs, trees, stationary phase}
\newlength{\temp@wc@width}
\newlength{\temp@wc@height}
\newcommand{\widecheck}[1]{%
  \setlength{\temp@wc@width}{\widthof{$#1$}}%
  \setlength{\temp@wc@height}{\heightof{$#1$}}%
  #1\hspace{-\temp@wc@width}%
  \raisebox{\temp@wc@height+2pt}[\heightof{$\widehat{#1}$}]%
     {\rotatebox[origin=c]{180}{\vbox to 0pt{\hbox{$\widehat{\hphantom{#1}}$}}}}%
}
\begin{document}

\begin{abstract}
We prove dispersive estimates for two models~: the adjacency matrix on a discrete regular tree, and the Schr\"odinger equation on a metric regular tree with the same potential on each edge/vertex. The latter model can be thought of as an extension of the case of periodic Schr\"odinger operators on the real line. We establish a $t^{-3/2}$-decay for both models which is sharp, as we give the first-order asymptotics.
\end{abstract}

\maketitle

\tableofcontents

\section{Introduction}

This article is concerned with the analysis of the Schwartz kernel of the evolution operator $\ee^{\ii tH}$ associated to a Schr\"odinger operator $H$ acting on the $(q+1)$-regular tree. More precisely, we provide first-order asymptotics for the kernel as $t\To+\infty$, which imply sharp dispersive estimates with decay $\asymp t^{-3/2}$ for any $q$. Our main interest is in quantum graphs, that is, the continuum case in which each edge of the tree is endowed a length and a potential, and the operator $H$ acts as a one-dimensional differential operator on the edges. This model can be regarded as an extension of the case of Schr\"odinger operators with a periodic potential on $\R$. However the proof is a lot simpler for the discrete model, so we treat it as well to introduce some key ideas.

\medskip

Consider the adjacency matrix on $\Z$, $(\cA f)(j) = f(j-1)+f(j+1)$ for $f\in\ell^2(\Z)$. An easy application of the Fourier transform $\mathscr{F}:L^2[0,2\pi]\to \ell^2(\Z)$, $f\mapsto (\hat{f}_k)$, shows that its spectrum $\sigma(\cA)=\sigma_{ac}(\cA) = [-2,2]$. In fact $\cA$ is seen to be unitarily equivalent to the operator of multiplication by $\phi(x)=2\cos x$ on $L^2[0,2\pi]$. It also follows that
\[
\ee^{\ii t\cA}(n,m) = \langle \mathscr{F}^{-1}\delta_n,(\mathscr{F}^{-1}\ee^{\ii t\cA}\mathscr{F})\mathscr{F}^{-1}\delta_m\rangle = \frac{1}{2\pi}\int_{0}^{2\pi}\ee^{\ii(m-n)x}\ee^{\ii t\phi(x)}\,\dd x = \ii^{m-n} J_{m-n}(2t)\,,
\]
where $J_k$ is the Bessel function. Its asymptotics are well-known as $t\To\infty$. For example, using \cite[p. 338]{Stein}, we deduce that
\begin{equation}\label{e:besasym}
\ee^{\ii t\cA}(n,m) = \ii^{m-n}\sqrt{\frac{2}{\pi}}\cdot \frac{1}{\sqrt{2t}}\cos\Big(2t-\frac{\pi(m-n)}{2}-\frac{\pi}{4}\Big)+O\Big(\frac{1}{t^{3/2}}\Big)\,.
\end{equation}

Since
\begin{equation}\label{e:disperz}
\|\ee^{\ii t\cA} f\|_{\infty} = \sup_{n\in\Z}\bigg|\sum_{m\in \Z}\ee^{\ii t \cA}(n,m)f(m)\bigg|\le \sup_{n,m\in \Z} |\ee^{\ii t \cA}(n,m)|\cdot \|f\|_1\,,
\end{equation}
one may be tempted to deduce that $\|\ee^{\ii t\cA} f\|_{\infty} \le \frac{C}{t^{1/2}} \|f\|_1$ for large $t$. However, the remainder term $O(t^{-3/2})$ depends on $m-n$, we actually have $\sup_{n,m}O(t^{-3/2})=\infty$ in the above asymptotics \cite{Lan,Olen,Kra}. The correct uniform bound that one can get is $\|\ee^{\ii t\cA} f\|_{\infty} \le \frac{C}{t^{1/3}}\|f\|_1$, see \cite{Lan,SteKev}. More generally, for $\cA$ on $\ell^2(\Z^d)$, we have $\ee^{\ii t \cA}(n,m) = \prod_{j=1}^d \ii^{m_j-n_j}J_{m_j-n_j}(2t)$, and one may deduce for large $t$ the \emph{sharp} estimate \cite{SteKev},
\[
\|\ee^{\ii t\cA} f\|_{\infty}\le \frac{C}{t^{d/3}}\|f\|_1\,.
\]

Since $\|\ee^{\ii t \cA} f\|_2 = \|f\|_2$ is constant with time, the fact that the kernel decays with $t$ shows that wave packets must be spreading out as time goes on. It is then quite intuitive that in higher dimensions there is more spreading out as there are more directions to diffuse.

Back to \eqref{e:besasym}, we emphasize that throughout the paper we are interested in the large $t$ behavior. Another interesting asymptotics is to fix time and study the kernel $\ee^{\ii t \cA}(x,y)$ as $d(x,y)\To\infty$. In case of $\Z$, it is known that the Bessel function $J_k(2t) \sim \frac{1}{\sqrt{2\pi k}}(\frac{\ee t}{k})^k$ as $k\To\infty$, see \cite[9.3.1, p.365]{Abra}. This says that $|(\ee^{\ii t \cA}\delta_0)(n)|=|\langle \delta_n, \ee^{\ii t\cA} \delta_0\rangle| = |J_n(2t)|$ decays very fast in $n$. Such decay cannot be inferred from asymptotics of the form \eqref{e:besasym} (and conversely $J_k(2t) \sim \frac{1}{\sqrt{2\pi k}}(\frac{\ee t}{k})^k$ is of course not the correct asymptotics as $t\To\infty$). To summarize, the order of limits ($t$ or $d(x,y)$) is important.

\medskip

We now explore the case of the infinite $(q+1)$-regular tree $\T_q$ with $q\ge 2$. Recall that $\T_q$ is a connected graph with no cycles such that each vertex has $(q+1)$ neighbors. We will show that here again, as expected, the spreading is faster than on $\Z$.

Here $(\cA f)(v)= \ds \sum_{w\sim v}f(w)$, where the sum is over nearest neighbors $w$ of $v$. It is well-known that $\sigma(\cA) = \sigma_{ac}(\cA) = [-2\sqrt{q},2\sqrt{q}]$, see e.g. \cite{CdV98}.

\begin{thm}\label{thm:maincomb}
Consider the adjacency matrix $\cA$ on the $(q+1)$-regular tree $\T_q$, $q\ge 2$.

The following asymptotics hold for the evolution semigroup~: for any vertices $v,w\in \T_q$ with $d(v,w)=n$, as $t\To \infty$, we have
\[
\ee^{\ii t\cA}(v,w) = \begin{cases} \frac{1}{\sqrt{\pi}\,t^{3/2}}\sin\left(2\sqrt{q}t-\frac{\pi}{4}\right)\cdot \frac{q^{\frac{-n}{2}+\frac{1}{4}}(2+(n+1)(q-1))}{(q-1)^2} +O(t^{-2})& \text{if } n \text{ is even},\\ \frac{-\ii}{\sqrt{\pi}\,t^{3/2}}\sin\left(2\sqrt{q}t+\frac{\pi}{4}\right)\cdot \frac{q^{\frac{-n}{2}+\frac{1}{4}}(2+(n+1)(q-1))}{(q-1)^2} +O(t^{-2}) & \text{if } n \text{ is odd.}\end{cases}
\]
The term $O(t^{-2})$ is uniform in $n=d(v,w)$ and independent of $v,w$. In particular, we may find $t_0$ and $C_q$ such that for all $t>t_0$, all $f\in L^1(\T_q)$, we have
\begin{equation}\label{e:maindis}
\|\ee^{\ii t\cA} f\|_{\infty}\le \frac{C_q}{t^{3/2}}\|f\|_1\,.
\end{equation}
\end{thm}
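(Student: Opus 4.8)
The strategy is to write the kernel as a scalar oscillatory integral over the spectrum and then run stationary phase at the two band edges.

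\emph{Step 1 (integral representation).} Since $\cA$ is self-adjoint with $\sigma(\cA)=\sigma_{ac}(\cA)=[-2\sqrt q,2\sqrt q]$, Stone's formula gives
\[
\ee^{\ii t\cA}(v,w)=\frac{1}{\pi}\int_{-2\sqrt q}^{2\sqrt q}\ee^{\ii tx}\,\Im G(v,w;x+\ii 0)\,\dd x,\qquad G(v,w;z):=\langle\delta_v,(\cA-z)^{-1}\delta_w\rangle .
\]
On $\T_q$ the Green function is explicit: solving $(\cA-z)u=\delta_w$ along the geodesic of length $n$ from $w$ to $v$ and using self-similarity of the rooted $q$-ary subtrees yields $G(v,w;z)=G(o,o;z)\,\xi(z)^n$, with $\xi(z)=\tfrac1{2q}\big(z-\sqrt{z^2-4q}\big)$ and $G(o,o;z)=\tfrac{-2q}{(q-1)z+(q+1)\sqrt{z^2-4q}}$, where $\sqrt{z^2-4q}\sim z$ near $+\infty$; on $(-2\sqrt q,2\sqrt q)$ it has boundary value $\ii\sqrt{4q-x^2}$. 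Substituting $x=2\sqrt q\cos\theta$ (so that $\xi(x+\ii 0)=q^{-1/2}\ee^{-\ii\theta}$, $\sqrt{4q-x^2}=2\sqrt q\sin\theta$) and taking the imaginary part turns this into
\[
\ee^{\ii t\cA}(v,w)=\int_0^\pi \ee^{2\ii\sqrt q\,t\cos\theta}\,g_n(\theta)\,\dd\theta,\qquad g_n(\theta)=\frac{2q^{1-n/2}}{\pi}\cdot\frac{\sin\theta\,\big((q+1)\cos n\theta\,\sin\theta+(q-1)\sin n\theta\,\cos\theta\big)}{(q+1)^2-4q\cos^2\theta}.
\]
Here $g_n$ is real-valued and real-analytic on $[0,\pi]$, since $(q+1)^2-4q\cos^2\theta\ge(q-1)^2>0$.

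\emph{Step 2 (phase, and quadratic vanishing of the amplitude).} The phase $\phi(\theta)=2\sqrt q\cos\theta$ has $\phi'(\theta)=-2\sqrt q\sin\theta$, which on $[0,\pi]$ vanishes only at $\theta=0,\pi$, where $\phi''\neq 0$. The key fact is that $g_n$ is even about each endpoint and vanishes there to exactly second order: writing $B_n(\theta)=(q+1)\cos n\theta\sin\theta+(q-1)\sin n\theta\cos\theta$ one has $B_n(0)=0$, $B_n'(0)=(q+1)+n(q-1)$, whence a short Taylor expansion gives
\[
g_n(\theta)=c_n\theta^2+O(\theta^4)\ \ (\theta\to0),\qquad g_n(\theta)=(-1)^n c_n(\pi-\theta)^2+O((\pi-\theta)^4)\ \ (\theta\to\pi),
\]
with $c_n=\dfrac{2q^{1-n/2}\big((q+1)+n(q-1)\big)}{\pi(q-1)^2}$ and $c_n\neq0$ for all $n\ge0$. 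It is this extra vanishing of the amplitude at the (endpoint) stationary points that replaces the generic $t^{-1/2}$ decay by $t^{-3/2}$; note also $(q+1)+n(q-1)=2+(n+1)(q-1)$.

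\emph{Step 3 (stationary phase and uniformity).} Take a smooth partition $1=\chi_0+\chi_\pi+\chi_1$ on $[0,\pi]$ with $\chi_0\equiv1$ near $0$, $\chi_\pi\equiv1$ near $\pi$, and $\chi_1$ supported away from $\{0,\pi\}$. Repeated integration by parts gives $\int\chi_1\ee^{\ii t\phi}g_n\,\dd\theta=O(t^{-N})$ for every $N$. On $\supp\chi_0$ the substitution $s=2\sin(\theta/2)$ linearises the phase, $\phi=2\sqrt q-\sqrt q\,s^2$, giving $\ee^{2\ii\sqrt q t}\int_0^\infty\ee^{-\ii\sqrt q t\,s^2}\,\widetilde g_n(s)\,\dd s$ with $\widetilde g_n$ smooth, compactly supported, and $\widetilde g_n(s)=c_n s^2+O(s^4)$ near $0$. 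By the Fresnel integral $\int_0^\infty s^2\ee^{-\ii\lambda s^2}\,\dd s=\tfrac{\sqrt\pi}{4}\ee^{-3\ii\pi/4}\lambda^{-3/2}$, a cutoff near $s=0$ and integration by parts elsewhere, this equals $c_n\tfrac{\sqrt\pi}{4}\ee^{-3\ii\pi/4}(\sqrt q\,t)^{-3/2}\ee^{2\ii\sqrt q t}+O(t^{-5/2})$. The term $\chi_\pi$ is handled identically via $\psi=\pi-\theta$, producing the conjugate Fresnel factor $\tfrac{\sqrt\pi}{4}\ee^{3\ii\pi/4}$, the phase $\ee^{-2\ii\sqrt q t}$, and the sign $(-1)^n$. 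Since the whole $n$-dependence of $g_n$ sits in the prefactor $q^{-n/2}$ and in $\cos n\theta,\sin n\theta$, one has $\|g_n^{(k)}\|_{L^\infty[0,\pi]}\le C_k(1+n)^k q^{-n/2}$, and the same bound controls the finitely many derivatives of $\widetilde g_n$ entering the remainders; as $\sup_{n\ge0}(1+n)^kq^{-n/2}<\infty$ for $q\ge2$, all error terms are $O(t^{-2})$ uniformly in $n$. Adding the two endpoint contributions, inserting $c_n$, and simplifying the exponentials ($\ee^{\ii(2\sqrt q t-3\pi/4)}+\ee^{-\ii(2\sqrt q t-3\pi/4)}=2\sin(2\sqrt q t-\pi/4)$ for $n$ even, $\ee^{\ii(2\sqrt q t-3\pi/4)}-\ee^{-\ii(2\sqrt q t-3\pi/4)}=-2\ii\sin(2\sqrt q t+\pi/4)$ for $n$ odd) gives exactly the stated first-order asymptotics. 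Finally $M_q:=\sup_{n\ge0}q^{1/4-n/2}\big((q+1)+n(q-1)\big)<\infty$, so the main term is $\le C_q t^{-3/2}$; hence $\sup_{v,w}|\ee^{\ii t\cA}(v,w)|\le C_q t^{-3/2}$ for $t>t_0$, and $\|\ee^{\ii t\cA}f\|_\infty=\sup_v\big|\sum_w\ee^{\ii t\cA}(v,w)f(w)\big|\le\sup_{v,w}|\ee^{\ii t\cA}(v,w)|\cdot\|f\|_1\le C_q t^{-3/2}\|f\|_1$, which is \eqref{e:maindis}.

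\emph{Main obstacle.} The delicate points are: (i) pinning down the correct branch and sign of $G(v,w;z)$, in particular that $\xi(x+\ii 0)=q^{-1/2}\ee^{-\ii\theta}$ (and not $-q^{-1/2}\ee^{-\ii\theta}$), which governs the $(-1)^n$ bookkeeping and hence the phase shift in the odd-$n$ case; (ii) the second-order — and not first-order — vanishing of $g_n$ at the band edges, i.e. the cancellation that would otherwise leave a spurious $t^{-1/2}$ term; and (iii) making the endpoint stationary-phase expansion uniform in $n=d(v,w)$, which works precisely because the $q^{-n/2}$ prefactor dominates the polynomial-in-$n$ growth of the amplitude's derivatives.
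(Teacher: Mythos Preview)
Your proof is correct and reaches the stated asymptotics by the same mechanism as the paper: the spectral representation via $\Im G^{\lambda+\ii 0}(v,w)$, the change of variables $\lambda=2\sqrt q\cos\theta$, stationary phase at the endpoints $\theta=0,\pi$, and the observation that $q^{-n/2}$ beats the polynomial-in-$n$ growth of the amplitude's derivatives for uniformity. The difference is in the packaging of the extra factor $t^{-1}$ beyond the generic $t^{-1/2}$: the paper first integrates by parts in $\lambda$ (exploiting $\Psi(\pm 2\sqrt q)=0$), which forces a split $(\Psi\Phi_n)'=\Psi'\Phi_n+\Psi\Phi_n'$ with the second piece handled by a further integration by parts, and then applies Olver's explicit stationary-phase estimate (Corollary~\ref{cor:phase}) to an amplitude that does not vanish at the endpoints. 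You instead change variables first, observe that $g_n$ is even about each endpoint and vanishes there to second order, and extract the $t^{-3/2}$ directly from the Fresnel integral $\int_0^\infty s^2\ee^{-\ii\lambda s^2}\,\dd s$.

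Your route is a bit more streamlined for the discrete model: it avoids the $\Psi'\Phi_n$/$\Psi\Phi_n'$ split and the detailed computation of $Q_{1,1}$ in \eqref{e:q11comb}--\eqref{e:q11gen}, and in principle yields the sharper remainder $O(t^{-5/2})$ from the endpoint contributions (though $O(t^{-2})$ is all that is claimed). The paper's route, on the other hand, isolates an explicit stationary-phase lemma with trackable constants (Corollary~\ref{cor:phase}), which is reused verbatim in the quantum-graph proof of Theorem~\ref{thm:mainquan}, where one must sum infinitely many band contributions and the explicit form of the remainder is essential. Your partition-of-unity argument would also work there, but you would have to redo the uniformity analysis in the band index $n$ by hand.
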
 

This result shows that the spreading is indeed faster than $\Z$~: we have the asymptotics $\asymp t^{-3/2}$ instead of $t^{-1/2}$, corresponding to the fact that the wave now has exponentially more directions to go to. Curiously however the effect of increasing the degree $q$ is not as substantial as the Euclidean case. While on $\Z^d$ one gets the asymptotics $\asymp t^{-d/2}$, here we always have $\asymp t^{-3/2}$, though increasing the degree does decrease the kernel $\ee^{\ii t\cA}(v,w)$.
%

\bigskip

We now consider the continuum case. The prototype here is the Euclidean space. Simple arguments using the Fourier transform \cite[Chapter 7.4]{Teschl} reveal that for the Laplacian on $L^2(\R^d)$, we have
\[
\ee^{\ii t\Delta}f(x) = \frac{1}{(4\pi\ii t)^{d/2}}\int_{\R^d}\ee^{\ii\frac{|x-y|^2}{4t}}\psi(y)\,\dd y\,.
\]
In particular, $\|\ee^{\ii t \Delta}\|_{L^1(\R^d)\to L^\infty(\R^d)} \le C t^{-d/2}$. In other words, the same type of decay we saw in the discrete model $\Z^d$ holds in the continuum, in fact it is a bit faster here. So let us consider the continuum analog of the regular tree, namely the infinite regular quantum tree.

Consider the $(q+1)$-regular tree $\T_q$ and endow it with an equilateral quantum structure: all edges have length $L$, carry the same potential $W$, and all vertices carry the same coupling constant $\alpha\in\R$. Assume the potential $W$ is edgewise symmetric, i.e. $W(L-x)=W(x)$. Denote this quantum tree by $\mathbf{T}_q$ and consider the corresponding Schr\"odinger operator
\[
H=-\Delta+W
\]
acting on the Hilbert space $L^2(\mathbf{T}_q) = \ds \mathop\oplus_{e\in E(\mathbf{T}_q)} L^2(0,L)$ with domain given by the set of functions $f=(f_e)\in \mathop\oplus H^2(0,L)$ satisfying the following $\delta$-boundary conditions: continuity at vertices, i.e. $f_e(v)=f_{e'}(v)=:f(v)$ if $e,e'$ are edges with origin $v$, and current relation $\ds \sum_{e,o(e)=v} f_e'(v) = \alpha f(v)$ for any vertex $v\in \T_q$. For $f=(f_e)\in D(H)$, we then have $Hf_e(x) = -f_e''(x)+W(x)f_e(x)$. The case $\alpha=0$ corresponds to the well-known Kirchhoff boundary conditions, where current is perfectly preserved.

We know by \cite{Carl} that $\sigma(H) = \{I_n\}_{n=1}^\infty \cup \{\delta_n\}_{n=1}^\infty$, where $I_n = [a_n,b_n]$ are bands of purely absolutely continuous spectrum (AC for short), and $\delta_n$ is an infinitely degenerate eigenvalue lying in the gap between $I_n$ and $I_{n+1}$. There is no singular continuous spectrum. We discuss the spectrum in more detail in Section~\ref{sec:prelimspe}.

We are now interested in the kernel of the evolution operator $\ee^{\ii tH}$. By the spectral theorem, if $\lambda$ is an eigenvalue with corresponding eigenvector $\psi_{\lambda}$, then $\ee^{\ii tH} \psi_{\lambda} = \ee^{\ii t \lambda}\psi_{\lambda}$. In particular $\|\ee^{\ii t H}\psi_{\lambda}\|_{\infty}=\|\psi_{\lambda}\|_{\infty}$ does not exhibit any decay with $t$. Therefore we should restrict our attention to the band spectrum, more precisely functions $\psi$ in the absolutely continuous subspace $\mathscr{H}_{ac}\subset\mathscr{H}=L^2(\mathbf{T}_q)$.

To state our main result, we need some notations. Given $z\in \C$, consider the eigenproblem on $[0,L]$
\[
-\psi_e''+W\psi_e=z\psi_e \,.
\]
Choose a basis of solutions $C_z(x),S_z(x)$ satisfying the initial values
\[
\begin{pmatrix} C_z(0)& S_z(0)\\C_z'(0)& S_z'(0)\end{pmatrix} = \begin{pmatrix} 1&0\\0&1\end{pmatrix}.
\]
Note that $C_z(x)=\cos\sqrt{z}x$ and $S_z(x)=\frac{\sin\sqrt{z}x}{\sqrt{z}}$ if $W\equiv 0$. Here, if $z\in\C^+:=\{z\in \C:\Im z>0\}$, we always choose the branch of $\sqrt{z}$ with positive imaginary part. Denote
\begin{equation}\label{e:cs}
c(z) = C_z(L), \qquad s(z) = S_z(L)
\end{equation}
and
\begin{equation}\label{e:muw}
w(z):= (q+1)c(z)+\alpha s(z)\,.
\end{equation}

Finally let $G^z_{\mathbf{T}_q}(x,y)$ be the Green's function (i.e. the resolvent kernel) of $H$, for $z\in \C^+$ and $x,y\in\mathbf{T}_q$. It is known that the limit $G^{\lambda+\ii 0}_{\mathbf{T}_q}(x,y)$ exists for $\lambda\in\sigma_{ac}$, see Section~\ref{sec:prelimspe}. 

\begin{thm}\label{thm:mainquan}
Consider the Schr\"odinger operator $H=-\Delta+W$ on the $(q+1)$-regular tree $\mathbf{T}_q$, $q\ge 2$, where $W$ is edge-symmetric and identical on each edge, and each vertex is endowed the same coupling constant $\alpha\in\R$. 

Denote by $I_n=[a_n,b_n]$ the $n$-th band of AC spectrum when the bands are arranged in increasing order. The following asymptotics hold for the evolution semigroup~: for any $x,y\in \mathbf{T}_q$, as $t\To \infty$, we have
\begin{multline}\label{e:mainkerquan}
\ee^{\ii tH}\mathbf{1}_{ac}(H)(x,y)= \frac{\ii q^{1/4}(q+1)}{(q-1)^2\sqrt{\pi}\, t^{3/2}}\sum_{n\ge 1} \bigg[\frac{\ee^{\frac{-\ii \pi}{4}}\ee^{\ii a_n t}|w'(a_n)|^{1/2}\left|s(a_n)\right|\Phi(a_n,x,y)}{2} \\
- \frac{\ee^{\frac{\ii \pi}{4}}\ee^{\ii b_n t}|w'(b_n)|^{1/2}\left|s(b_n)\right|\Phi(b_n,x,y)}{2}\bigg]+O(t^{-2})\,,
\end{multline}
where
\begin{equation}\label{e:Philxy}
\Phi(\lambda,x,y) = \frac{\Im G^{\lambda+\ii 0}_{\mathbf{T}_q}(x,y)}{\Im G^{\lambda+\ii 0}_{\mathbf{T}_q}(o,o)}\,,
\end{equation}
with $o\in \T_q$ an arbitrary vertex.
The term $O(t^{-2})$ is uniform in $d(x,y)$ and independent of $x,y$. In particular, we may find $t_0$ and $C_q$ such that for all $t>t_0$, all $f\in L^1(\mathbf{T}_q)\cap L^2(\mathbf{T}_q)$, we have
\begin{equation}\label{e:maindis2}
\|\ee^{\ii tH}\mathbf{1}_{ac}(H) f\|_{\infty}\le \frac{C_q}{t^{3/2}}\|f\|_1\,.
\end{equation}
\end{thm}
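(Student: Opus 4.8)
The plan is to reduce everything to a stationary-phase analysis of a one-dimensional spectral integral, in close analogy with the discrete case (Theorem~\ref{thm:maincomb}), but with the extra bookkeeping forced by the band structure. First I would write, via the spectral theorem and the Stone formula, the kernel as
\[
\ee^{\ii tH}\mathbf{1}_{ac}(H)(x,y)=\frac{1}{\pi}\int_{\sigma_{ac}}\ee^{\ii t\lambda}\,\Im G^{\lambda+\ii 0}_{\mathbf{T}_q}(x,y)\,\dd\lambda
=\frac{1}{\pi}\sum_{n\ge 1}\int_{a_n}^{b_n}\ee^{\ii t\lambda}\,\Im G^{\lambda+\ii 0}_{\mathbf{T}_q}(x,y)\,\dd\lambda .
\]
Here I would invoke the description of $\sigma(H)$ from \cite{Carl} and Section~\ref{sec:prelimspe}: the bands $I_n=[a_n,b_n]$ are governed by the discriminant, and the spectral condition should take the form $|w(\lambda)|\le (q+1)+1$ or, after the standard substitution relating the quantum tree to a discrete tree with energy-dependent coefficients, a condition of the type $\frac{w(\lambda)}{(\text{something})}\in[-2\sqrt q,2\sqrt q]$; the key structural point I need is that on each band $\lambda\mapsto w(\lambda)$ (equivalently the relevant Floquet-type function) is a diffeomorphism onto an interval, with $w'(a_n),w'(b_n)\ne 0$, while $\Im G^{\lambda+\ii 0}_{\mathbf{T}_q}(x,y)$ factorizes as a universal scalar times $\Phi(\lambda,x,y)$, with the $(x,y)$-dependence entering only through the bounded, $O(1)$ factor $\Phi$ and an exponentially decaying prefactor in $d(x,y)$ coming from the tree Green's function. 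This factorization is exactly what \eqref{e:Philxy} records, and it is the reason the final $O(t^{-2})$ can be made uniform in $d(x,y)$: all $d(x,y)$-dependence sits in a uniformly bounded factor.

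Next I would analyze each band integral $\int_{a_n}^{b_n}\ee^{\ii t\lambda}g_n(\lambda)\,\dd\lambda$ where $g_n(\lambda)=\frac{1}{\pi}\Im G^{\lambda+\ii 0}_{\mathbf{T}_q}(x,y)$. The phase $\lambda\mapsto\lambda$ has no interior critical point, so the decay is driven by the endpoints $a_n,b_n$, where the density $g_n$ has an inverse-square-root singularity of van Hove type (the standard $1$D band-edge behavior): near an edge $\lambda_\ast\in\{a_n,b_n\}$ one has $g_n(\lambda)\sim c_\ast|\lambda-\lambda_\ast|^{-1/2}(1+O(|\lambda-\lambda_\ast|))$. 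Integrating $\ee^{\ii t\lambda}|\lambda-\lambda_\ast|^{-1/2}$ against a smooth cutoff produces the familiar $t^{-1/2}$ leading term with a $\ee^{\pm\ii\pi/4}$ phase and $\ee^{\ii\lambda_\ast t}$ oscillation; this is precisely the mechanism behind \eqref{e:besasym}. But \eqref{e:mainkerquan} asks for $t^{-3/2}$, not $t^{-1/2}$, so I need the leading $t^{-1/2}$ contributions to \emph{cancel between the two edges of each band}, or rather to combine across bands, leaving $t^{-3/2}$. The clean way to see this: write $g_n$ on $I_n$ after the change of variable $\mu=w(\lambda)$ (or the analogous Floquet variable) so that $g_n(\lambda)\,\dd\lambda$ becomes $\tilde g(\mu)\,\dd\mu$ on a fixed interval $[-2\sqrt q,2\sqrt q]$, with $\tilde g$ having the \emph{same} square-root edge behavior as in the discrete tree problem. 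Then the sum over $n$ of $\int_{a_n}^{b_n}\ee^{\ii t\lambda}g_n\,\dd\lambda$ is, up to the Jacobian $1/w'(\lambda)$ and the oscillating factors $\ee^{\ii a_n t}$, $\ee^{\ii b_n t}$, a superposition of copies of the discrete-tree integral, and each copy already has the $t^{-3/2}$ behavior established in Theorem~\ref{thm:maincomb} because there the edge singularity is not $|\mu\mp 2\sqrt q|^{-1/2}$ but rather the Plancherel density of $\cA$ on $\T_q$, which vanishes like $|\mu\mp2\sqrt q|^{1/2}$ at the edges. I would make this precise by recalling the explicit form of $\Im G^{\lambda+\ii 0}_{\mathbf{T}_q}(o,o)$, which behaves like $|w'(\lambda)|^{1/2}|s(\lambda)|$ times a square-root-vanishing Plancherel factor near the band edges — this is the origin of the factor $|w'(a_n)|^{1/2}|s(a_n)|$ in \eqref{e:mainkerquan}.

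With the edge asymptotics in hand, the leading term of each band integral is obtained by the one-dimensional stationary-phase (endpoint) lemma applied to a density vanishing like a square root: one gets $\int_{a_n}^{b_n}\ee^{\ii t\lambda}g_n(\lambda)\,\dd\lambda = C\, t^{-3/2}\big(\ee^{\ii a_n t}A_n^- - \ee^{\ii b_n t}A_n^+\big)+O_n(t^{-2})$ with $A_n^\pm$ the square-root coefficients of $g_n$ at the two edges; matching constants gives exactly the bracket in \eqref{e:mainkerquan}, with the $\Phi(a_n,x,y)$, $\Phi(b_n,x,y)$ factors carrying the spatial dependence. The remaining and, I expect, the main technical obstacle is \emph{summability and uniformity in $n$}: I must show $\sum_{n\ge1}$ of both the leading coefficients and the error terms converges, uniformly in $x,y$. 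For the leading coefficients this needs a bound like $|w'(a_n)|^{1/2}|s(a_n)|\cdot\|\Phi(a_n,\cdot,\cdot)\|_\infty = O(b_n^{-1-\veps})$ or at least square-summable decay, which should follow from Weyl-type asymptotics for the bands $a_n,b_n\asymp n^2$ together with the large-$z$ asymptotics $c(z)=\cos\sqrt z L+o(1)$, $s(z)=\frac{\sin\sqrt z L}{\sqrt z}+o(\cdot)$ and the exponential decay in $d(x,y)$ of the tree Green's function; for the error terms I need the $O(t^{-2})$ from each band to come with an $n$-summable constant, which again reduces to controlling the second derivative / higher-order terms of $g_n$ in the $\mu$-variable uniformly and then summing the Jacobians. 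Once the series is shown to converge in $L^\infty_{x,y}$ and the tail is estimated, extracting $t^{-3/2}$ from the sum and absorbing everything else into $O(t^{-2})$ is routine, and \eqref{e:maindis2} follows immediately from \eqref{e:mainkerquan} by the same Schur/$L^1\to L^\infty$ argument as in \eqref{e:disperz}, since the $t^{-3/2}$ coefficient is bounded uniformly in $x,y$.
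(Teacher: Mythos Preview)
Your proposal contains a genuine misconception about the band-edge behavior that derails the middle of the argument. You claim that $g_n(\lambda)=\frac{1}{\pi}\Im G^{\lambda+\ii 0}_{\mathbf{T}_q}(x,y)$ has a van Hove inverse-square-root singularity $c_\ast|\lambda-\lambda_\ast|^{-1/2}$ at the band edges, and then speculate that the resulting $t^{-1/2}$ contributions must cancel between edges. This is false for $q\ge 2$. From \eqref{e:imoo},
\[
\Im G^{\lambda+\ii 0}_{\mathbf{T}_q}(o,o)=\frac{(q+1)\,|s(\lambda)|\,\sqrt{4q-w(\lambda)^2}}{2\big[(q+1)^2-w(\lambda)^2\big]}\,,
\]
and since $|w(\lambda)|\le 2\sqrt{q}<q+1$ on $\sigma_{ac}$, the denominator is bounded away from zero. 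Hence the density \emph{vanishes} like $\sqrt{4q-w(\lambda)^2}\asymp|\lambda-\lambda_\ast|^{1/2}$ at the edges (the van Hove $|\lambda-\lambda_\ast|^{-1/2}$ picture is the $q=1$ case, where the denominator itself vanishes). There is no $t^{-1/2}$ term to cancel; your later self-correction via the discrete Plancherel density is the right intuition, but the mechanism you propose (edge-to-edge cancellation) is not what happens. Relatedly, $\Im G^{\lambda}_{\mathbf{T}_q}(o,o)$ is \emph{not} proportional to $|w'(\lambda)|^{1/2}$; that factor in \eqref{e:mainkerquan} comes from the Hessian in stationary phase, not from the density itself.

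The paper's route exploits this vanishing directly: since $\Im G^{\lambda}(x,y)=\Psi_1(\lambda)\Phi(\lambda,x,y)$ with $\Psi_1$ vanishing at both edges of each band, one integrates by parts once in $\lambda$ with \emph{no boundary terms}, gaining a clean factor $t^{-1}$. The remaining integrand $\partial_\lambda(\Psi_1\Phi)$ then has a $|\lambda-\lambda_\ast|^{-1/2}$ singularity; the change of variables $w(\lambda)=2\sqrt{q}\cos\theta$ regularizes it, the phase $\phi_n(\theta)=w_n^{-1}(2\sqrt{q}\cos\theta)$ acquires nondegenerate critical points at $\theta=0,\pi$ with $\phi_n''(0)=-2\sqrt{q}/w'(a_n)$, and a stationary-phase lemma with explicit remainder (Corollary~\ref{cor:phase}) gives the additional $t^{-1/2}$ and the factor $|w'(a_n)|^{1/2}$. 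The bulk of the work, as you correctly anticipate, is proving that the infinite sum of error terms over the bands converges; this uses the expansions $s(\lambda)=\frac{\sin\sqrt{\lambda}L}{\sqrt{\lambda}}+O(\lambda^{-1})$, $w'(\lambda)\asymp\lambda^{-1/2}$, etc., together with $a_n\asymp n^2$, to show each error is $O(n^{-2})$ uniformly in $x,y$. Your outline of the summability issue is on target, but you should replace the cancellation story with the integrate-by-parts-then-stationary-phase scheme.
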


The function $\Phi(\lambda,x,y)$ represents the correlation of a generic wavefunction of $H$ on $\mathbf{T}_q$, with energy $\lambda$, at the points $x,y$. See \cite{ISW} for a precise statement. It can be written explicitly in terms of the functions $S_{\lambda}(x)$ defined before \eqref{e:cs}, see \eqref{e:psi2} and \eqref{e:psi3}. In particular, it is known that $\Phi(\lambda,x,y)$ is bounded over $\sigma_{ac}(H)$ and decays exponentially in $d(x,y)$. The function $s(\lambda)$ is given by \eqref{e:cs} and $w(\lambda)$ as in \eqref{e:muw}. We also denoted $w'(\lambda):=\partial_{\lambda}w(\lambda)$. The sum in \eqref{e:mainkerquan} is absolutely convergent, as follows from the estimates of Section~\ref{sec:quancase}.

The main term in \eqref{e:mainkerquan} depends on $\alpha$ through $a_n,b_n,w'$ and $\Phi$. As $\alpha\to\infty$, one approaches the Dirichlet conditions, where no dispersion is expected since the tree degenerates into a direct sum of segments with no continuous spectrum. This can also be seen in the main term; as discussed in \S~\ref{sec:prelimspe}, $\lambda\in \sigma_{ac}$ iff $|w(\lambda)|\le 2\sqrt{q}$. Since $w(\lambda) = (q+1)c(\lambda)+\alpha s(\lambda)$, we see that for $\lambda$ to stay in $\sigma_{ac}$ as $\alpha\to\infty$, we must have $s(\lambda)\to 0$. This implies that the bands $[a_n,b_n]$ move and shrink to the zero $\delta_n$ of $s(\lambda)$ in the limit, so the main term vanishes as $s(a_n)=s(b_n)=s(\delta_n)=0$.

On the other hand, if $W=\alpha=0$, the main term becomes a bit more explicit; we get $a_n = (\frac{(n-1)\pi+\theta}{L})^2$, $b_n=(\frac{n\pi-\theta}{L})^2$ and $|w'(\lambda)|^{1/2}|s(\lambda)| = \sqrt{\frac{L(q+1)}{2}}(\frac{|\sin\sqrt{\lambda}L|}{\sqrt{\lambda}})^{3/2}$. Here $\theta=\arccos \frac{2\sqrt{q}}{q+1}$. If $x,y$ belong to the same edge, then $\Phi(\lambda,x,y) = \cos(\sqrt{\lambda}(x-y))$ using the arguments in \cite[p.108]{ISW}.

Our result for the adjacency matrix on $\T_q$ also involves $\Phi$ actually. Namely, the quantity $q^{-n/2}(2+(n+1)(q-1))$ in Theorem~\ref{thm:maincomb} is nothing but $(q+1)\Phi_{\cA}(2\sqrt{q},v,w)$. 

\medskip

Note that since $\|\ee^{\ii t H}\mathbf{1}_{ac}\|_{L^2\to L^2}\le 1$, one may use the Riesz-Thorin interpolation to deduce from our results that $\|\ee^{\ii t H}\mathbf{1}_{ac}\|_{L^{p'}\to L^p}\le C t^{\frac{-3}{2}(1-\frac{2}{p})}$, for $p'=\frac{p}{p-1}$, $p\ge 2$.

\subsection{Previous results}
Besides the well-known case of the Laplacian on the real line, there are some interesting earlier works concerning dispersion on networks. In \cite{AAN}, the authors consider the Laplacian on the tadpole graph (this consists of the half-line with a loop at the origin, a.k.a. a lasso graph). It is found that the speed of dispersion in the AC subspace cannot exceed $t^{-1/2}$. In \cite{AAN2}, the case of star graphs is considered, where each edge in the star has infinite length (a half-line). The authors allow for the presence of potentials on the edges satisfying some decay condition, and establish the speed of dispersion $t^{-1/2}$. In \cite{BI}, the authors replace the central vertex of the star by a finite tree, in other words they study a finite tree with terminal leaves of infinite length. They only consider the Laplacian, i.e. no edge-potential, but they allow for coupling constants $(\alpha_j)_{j=1}^p$ at the vertices of the finite tree. They establish a speed of dispersion $t^{-1/2}$.

Though it is not always proved that the speed of dispersion is sharp in these models, one heuristically expects it to be the case, because one can think of these models as being essentially one-dimensional, with some geometric obstacle of finite size (of course this heuristics doesn't imply there is any simple proof for the asymptotics). This is not the case for the regular tree which is genuinely different, hence the distinction in the speed which seems to be observed for the first time in quantum graphs.

We finally mention that the dispersive estimates have classically found important applications in the study of nonlinear equations, see e.g. \cite{HMMS,T} concerning Strichartz estimates and \cite{Cucca2} for a perturbation problem.

\subsection{The case $q=1$}\label{sec:q=1}
The case (not considered in our paper) of $(q+1)$-regular trees with $q=1$ corresponds to the real line $\R$. The corresponding operator $H$ becomes a periodic Schr\"odinger operator on $\R$, with $L$-periodic potential $W$, and moreover a singular $\delta$-potential $\alpha  \ds \sum_{n\in\Z}\delta(t-nL)$ of Kronig-Penney type. If $W=\alpha=0$, this is just the Laplacian on $\R$, the speed of dispersion is then $t^{-1/2}$. If $\alpha=0$ and $W$ is non-zero, we have a standard periodic Schr\"odinger operator on $\R$. This case has already been studied in the literature \cite{Fir,Cai,Cucca}. It was observed that the presence of the potential can slow down the speed of dispersion to $t^{-1/3}$. Technically speaking, it seems the main reason for this is that the modulus and $\lambda$-variation of the quantity corresponding to \eqref{e:Philxy} does not decay in $d(x,y)$, in contrast to our case, where these decay as $\lesssim Cq^{-d(x,y)/2}$. Heuristically, it is indeed plausible that the wavefunction correlations between distinct points in the quantum tree are much less pronounced than in one dimension. For a more detailed comparison with periodic Schr\"odinger operators on $\R$, see Appendix~\ref{app:per}.

\subsection{Open problems}

To the best of our knowledge, the study of dispersion for periodic Schr\"odinger operators on $\R^d$ in dimension $d\ge 2$ has not been addressed yet. Is the decay still $\lesssim t^{-d/2}$ as the free case, or can it slow down like in $d=1$~? The Bethe-Sommerfeld conjecture, now a theorem \cite{Par,Vel}, may provide a simplifying input for the analysis. In fact, in one dimension, dealing with infinitely many spectral bands \cite{Cucca} is a lot more technical than dealing with finitely many bands \cite{Fir}. This difficulty also appears in our analysis of $\mathbf{T}_q$, $q\ge 2$, in the control of infinite series of error terms. 

In a different direction, it is natural to ask about dispersion in more general trees. A quite large family of trees with a somewhat periodic geometry and potential is given by universal covers of finite graphs. Such trees have attracted a lot of interest in recent years \cite{KLW,AS2,ABS,AISW}. It is now known both in the discrete and metric case that the spectrum of these trees consists of bands of purely absolutely continuous spectrum with possibly some degenerate eigenvalues. Our present proof of dispersion however uses somewhat more precise information on the Green's functions which is not yet available for these more general trees.

\begin{rem}
After the paper was submitted to the arXiv, Maxime Ingremeau informed us of the paper of A.~J.~Eddine \emph{``Schr\"odinger equation on homogeneous trees''} J. Lie Theory \textbf{23} (2013), 779--794. This paper also provides dispersive estimates for the discrete model $(\cA,\T_q)$. Therein, we also discovered the paper of A.~G.~Setti \emph{``$L^p$ and operator norm estimates for the complex time heat operator on homogeneous trees''}, Trans. Amer. Math. Soc. \textbf{350} (1998), 743--768, which also proves dispersive estimates for $(\cA,\T_q)$. These proofs are different from ours, relying on harmonic analysis on trees rather than Green's functions, and the papers do not treat the continuum model of quantum graphs, which is our main concern as it offers new challenges, as can already be seen in the transition from $(\cA,\Z)$ to periodic Schr\"odinger operators $(H,\R)$.
\end{rem}

\section{Proof of the result for combinatorial trees}\label{sec:comb}
%
This section is devoted to the proof of Theorem~\ref{thm:maincomb}. The evolution kernel is essentially the Fourier transform of the spectral density, see \eqref{e:specthm}. On the other hand, the spectral density at different vertices is a multiple of the one on the diagonal, see \eqref{e:imgvw}. We thus begin by proving Theorem~\ref{thm:maincomb} for $v=w$ using the stationary phase lemma. We use a version with an explicit error given in Appendix~\ref{app:stat}. This is especially useful to ensure that the estimate is uniform in the distance $n$ between $v$ and $w$ in the general case.

\subsection{Basic considerations} 
A well-known analysis of the resolvent \cite{KS,Klein,AS} shows that for $\lambda\in\sigma(\cA)=[-2\sqrt{q},2\sqrt{q}]$,
\begin{equation}\label{e:discregree}
\Im G^{\lambda+\ii 0}(v,v) = \frac{(q+1)\sqrt{4q-\lambda^2}}{2[(q+1)^2-\lambda^2]}\,.
\end{equation}

More generally, if $d(v,w)=n$, then
\begin{equation}\label{e:imgvw}
\Im G^{\lambda+\ii 0 }(v,w) = \Im G^{\lambda + \ii 0}(v,v) \cdot \Phi_n(\lambda)\,,
\end{equation}
where $\Phi_{n}(\lambda)$ is the \emph{spherical function of $\T_q$}, cf. \cite{ISW}. It is given explicitly by
\begin{equation}\label{e:sfer}
\Phi_n(\lambda)=q^{-n/2}\bigg(\frac{2}{q+1} P_n\Big(\frac{\lambda}{2\sqrt{q}}\Big) + \frac{q-1}{q+1}Q_n\Big(\frac{\lambda}{2\sqrt{q}}\Big)\bigg)\,,
\end{equation}
where $P_n(\cos\theta)=\cos n\theta$ and $Q_n(\cos\theta)=\frac{\sin(n+1)\theta}{\sin\theta}$ are the Chebyshev polynomials of the first and second kinds, respectively.

It follows by the spectral theorem (see \cite[Lemma 3.6]{AS}) that if $\mu_{v,w}$ is the spectral measure at $\delta_v,\delta_w$, then
\begin{equation}\label{e:specthm}
\ee^{\ii t\cA}(v,w) = \int_{\sigma(\cA)} \ee^{\ii t\lambda}\,\dd\mu_{v,w}(\lambda) = \frac{1}{\pi}\int_{-2\sqrt{q}}^{2\sqrt{q}}\ee^{\ii t\lambda}\Im G^{\lambda+\ii 0}(v,w)\,\dd\lambda\,.
\end{equation}

\subsection{Diagonal terms}
In view of \eqref{e:imgvw} and \eqref{e:specthm}, we first prove the theorem for $v=w$.

Let $\Psi(\lambda) = \Im G^{\lambda+\ii 0}(v,v) = \frac{(q+1)\sqrt{4q-\lambda^2}}{2[(q+1)^2-\lambda^2]}$. Then $\Psi(\pm 2\sqrt{q})=0$, so integrating by parts,
\begin{equation}\label{e:1stbypar}
\int_{-2\sqrt{q}}^{2\sqrt{q}}\ee^{\ii t\lambda}\Psi(\lambda)\,\dd\lambda = \frac{\ee^{\ii t\lambda}}{\ii t}\Psi(\lambda)\Big|_{-2\sqrt{q}}^{2\sqrt{q}} - \frac{1}{\ii t}\int_{-2\sqrt{q}}^{2\sqrt{q}}\ee^{\ii t\lambda}\Psi'(\lambda)\,\dd\lambda = \frac{\ii}{t}\int_{-2\sqrt{q}}^{2\sqrt{q}}\ee^{\ii t\lambda}\Psi'(\lambda)\,\dd\lambda\,.
\end{equation}

On the other hand,
\begin{equation}\label{e:psi'}
\Psi'(\lambda)
=\frac{\lambda(q+1)(6q-\lambda^2-q^2-1)}{2((q+1)^2-\lambda^2)^2\sqrt{4q-\lambda^2}}\,.
\end{equation}

This has a singularity at $\pm 2\sqrt{q}$. But note that $\Psi'(\lambda)=0$ iff $\lambda=0$ or $\lambda^2=6q-q^2-1$. For $q\ge 6$ the latter case never occurs. In fact, we see more precisely that for $q\ge 6$, $\Psi'(\lambda)$ is positive on $[-2\sqrt{q},0]$ and negative on $[0,2\sqrt{q}]$. Hence,
\begin{multline}\label{e:totvar}
|\ee^{\ii t\cA}(v,v)|\le \frac{1}{\pi t}\int_{-2\sqrt{q}}^{2\sqrt{q}}|\Psi'(\lambda)|\,\dd\lambda =\frac{1}{\pi t}\Big(\int_{-2\sqrt{q}}^{0}\Psi'(\lambda)\,\dd\lambda -\int_{0}^{2\sqrt{q}}\Psi'(\lambda)\,\dd\lambda\Big)\\
= \frac{2\Psi(0)}{\pi t} = \frac{2\sqrt{q}}{(q+1)\pi t}\,.
\end{multline}

The case $2\le q\le 5$ can be handled similarly, here $\Psi'$ has additional sign changes at $E_{\pm}:= \pm\sqrt{6q-q^2-1}$, so the bound becomes 
$$|\ee^{\ii t\cA}(v,v)|\le \frac{2\Psi(E_-)-2\Psi(0)+2\Psi(E_+)}{\pi t}=\frac{1}{\pi t}\left(\frac{q+1}{q-1}-\frac{2\sqrt{q}}{q+1}\right).$$ 

Still, this bound on $\ee^{\ii t\cA}(v,v)$ is based on the trivial upper bound $|\int \ee^{\ii t\lambda}\Psi'(\lambda)\,\dd\lambda|\le \int |\Psi'(\lambda)|\,\dd\lambda$, which is lossy. To estimate the speed of decay more carefully we shall use the \emph{method of stationary phase} for such oscillatory integrals.

To avoid the singular expression of $\Psi'(\lambda)$, we first consider the change of variables $\lambda = 2\sqrt{q}\cos\theta$. This gives
\[
\int_{-2\sqrt{q}}^{2\sqrt{q}}\ee^{\ii t\lambda}\Psi'(\lambda)\,\dd\lambda = \int_0^\pi \ee^{2\ii t\sqrt{q}\cos\theta}\frac{\sqrt{q}\cos\theta(q+1)(6q-4q\cos^2\theta-q^2-1)}{((q+1)^2-4q\cos^2\theta)^2}\,\dd\theta\,.
\]
The phase function $\phi(\theta)=2\sqrt{q}\cos\theta$ has two critical points $0,\pi$, at which $\phi'(x)=0$. In principle such an integral can now be controlled using standard stationary phase results \cite[p.334]{Stein} or \cite[Theorem 3.11]{Zwor}, after multiplying by bump functions around the critical points and controlling the remainder errors. Though this is enough for the present integral, in the later case of quantum graphs we shall need the more precise version given in Corollary~\ref{cor:phase}, which gives an explicit bound on the error. So let us illustrate its use here; it also has the advantage of being directly applicable to critical points on the boundaries of finite intervals, as we have here.

\medskip

We define
\begin{equation}\label{e:g}
g(\theta)=\frac{\sqrt{q}\cos\theta(q+1)(6q-4q\cos^2\theta-q^2-1)}{((q+1)^2-4q\cos^2\theta)^2}\,
\end{equation}
and divide $\int_0^\pi = \int_0^{\pi/2} + \int_{\pi/2}^\pi$. For the first part, the only critical point is at $0$. Moreover, $\phi''(0) = -2\sqrt{q}$. So \eqref{e:q11} takes the form $Q_{1,1}(\theta) = \frac{g(\theta)}{2\sqrt{q}\sin\theta}-\frac{g(0)}{2\sqrt{2q}\sqrt{1-\cos\theta}}$. Using $1-\cos\theta=2\sin^2\frac{\theta}{2}$ and $g(0)=\frac{-\sqrt{q}(q+1)}{(q-1)^2}$ we get
\begin{multline*}
Q_{1,1}(\theta)=\frac{q+1}{2\sin\theta}\Big[\frac{\cos\theta(6q-4q\cos^2\theta-q^2-1)}{((q+1)^2-4q\cos^2\theta)^2}+\frac{\cos\frac{\theta}{2}}{(q-1)^2}\Big]\\
=\frac{q+1}{2\sin\theta}\Big[\frac{\cos\theta[-(q-1)^2+4q\sin^2\theta](q-1)^2+\cos\frac{\theta}{2}[(q-1)^2+4q\sin^2\theta]^2}{((q-1)^2+4q\sin^2\theta)^2(q-1)^2}\Big]\\
=\frac{q+1}{2\sin\theta}\Big[\frac{(q-1)^4(\cos\frac{\theta}{2}-\cos\theta)+4q(q-1)^2\sin^2\theta[\cos\theta+2\cos\frac{\theta}{2}]+16q^2\sin^4\theta\cos\frac{\theta}{2}}{((q-1)^2+4q\sin^2\theta)^2(q-1)^2}\Big].
\end{multline*}
We already know from \eqref{e:q11a} that $Q_{1,1}(0)$ is finite, but it is useful to have a uniform bound. The only term we should control above is $\frac{\cos\frac{\theta}{2}-\cos\theta}{\sin\theta}$. We write $\frac{\cos\frac{\theta}{2}-\cos\theta}{\sin\theta} = \frac{\cos\frac{\theta}{2}-\cos^2\frac{\theta}{2}+\sin^2\frac{\theta}{2}}{2\sin\frac{\theta}{2}\cos\frac{\theta}{2}} = \frac{1-\cos\frac{\theta}{2}}{2\sin\frac{\theta}{2}}+\frac{\sin\frac{\theta}{2}}{2\cos\frac{\theta}{2}} = \frac{\sin\frac{\theta}{4}}{2\cos\frac{\theta}{4}}+\frac{\sin\frac{\theta}{2}}{2\cos\frac{\theta}{2}}$. Thus,
\begin{multline}\label{e:q11comb}
Q_{1,1}(\theta) = \frac{q+1}{((q-1)^2+4q\sin^2\theta)^2(q-1)^2}\bigg[\frac{(q-1)^4}{4}\Big(\tan\frac{\theta}{4}+\tan\frac{\theta}{2}\Big)\\
+2q(q-1)^2\sin\theta\Big[\cos\theta+2\cos\frac{\theta}{2}\Big]+8q^2\sin^3\theta\cos\frac{\theta}{2}\bigg]\,.
\end{multline} 
As we saw in \eqref{e:totvar}, the total variation $V(f)=\int_0^{\pi/2} |f'|$ may be bounded by $2C_f\|f\|_\infty$, where $C_f$ is the number of roots of $f'$. It is clear from \eqref{e:q11comb} that $Q_{1,1}$ is analytic over $[0,\frac{\pi}{2}]$, so the number of roots of $Q_{1,1}'$ in $[0,\frac{\pi}{2}]$ is a finite number $C_Q$ which may be found exactly, and $V_{0,\frac{\pi}{2}}(Q_{1,1})\le 2C_Q\frac{q+1}{(q-1)^6}((q-1)^4+5q(q-1)^2+8q^2)=:2C_{Q}c_q$. Recalling $g(0)=\frac{-\sqrt{q}(q+1)}{(q-1)^2}$, it follows from Corollary~\ref{cor:phase} that
\[
\bigg|\int_0^{\pi/2}\ee^{\ii t\phi(\theta)}g(\theta)\,\dd \theta - \ee^{2\ii t \sqrt{q}}\ee^{-\pi\ii/4}\sqrt{\frac{\pi}{4\sqrt{q}t}}g(0)\bigg| \\ \le \frac{1}{t}\left(c_q + 2C_Qc_q + c_q'\right) ,
\]
where $c_q'=\frac{1}{\sqrt{2}}\frac{(q+1)}{(q-1)^2}$. 

We argue similarly for $\int_{\pi/2}^{\pi}$, where the critical point is at $\pi$, cf. \eqref{e:end}.

We conclude that
\begin{equation}\label{e:statpha}
\int_{-2\sqrt{q}}^{2\sqrt{q}}\ee^{\ii t\lambda}\Psi'(\lambda)\,\dd\lambda = \Big(\frac{\pi}{\sqrt{q}\,t}\Big)^{1/2}\cdot \frac{\ee^{\frac{-\ii \pi}{4}}\ee^{2\ii \sqrt{q} t}g(0) + \ee^{\frac{\ii \pi}{4}}\ee^{-2\ii \sqrt{q} t}g(\pi)}{2} + O(t^{-1})\,.
\end{equation}
Recalling \eqref{e:g}, we have $g(\pi)=-g(0)$. So this simplifies to
\[
\int_{-2\sqrt{q}}^{2\sqrt{q}}\ee^{\ii t\lambda}\Psi'(\lambda)\,\dd\lambda = \ii\Big(\frac{\pi}{\sqrt{q}\,t}\Big)^{1/2}\sin\Big(2\sqrt{q}t-\frac{\pi}{4}\Big)\cdot \frac{-\sqrt{q}(q+1)}{(q-1)^2} +O(t^{-1})\,.
\]

Using \eqref{e:specthm} and \eqref{e:1stbypar}, we finally conclude that
\[
\ee^{\ii t\cA}(v,v) = \frac{1}{t^{3/2}\sqrt{\pi}}\sin\Big(2\sqrt{q}t-\frac{\pi}{4}\Big)\cdot \frac{q^{1/4}(q+1)}{(q-1)^2} +O(t^{-2})\,,
\]
with $O(t^{-2})$ independent of $v$. This proves Theorem~\ref{thm:maincomb} for $v=w$.

\subsection{General case}
Now suppose that $d(v,w)=n$. Then by \eqref{e:imgvw},
\[
\Im G^{\lambda}(v,w) =\Psi(\lambda)\Phi_n(\lambda)\,.
\]
This quantity still vanishes at $\pm 2\sqrt{q}$, so we get as in \eqref{e:1stbypar},
\begin{equation}\label{e:genkerA}
\ee^{\ii t\cA}(v,w) = \frac{\ii}{\pi t}\int_{-2\sqrt{q}}^{2\sqrt{q}}\ee^{\ii t\lambda} \left[\Psi'(\lambda)\Phi_n(\lambda) + \Psi(\lambda)\Phi_n'(\lambda)\right]\dd\lambda\,.
\end{equation}

As we shall see, the first term can be handled in essentially the same way as before, so let us consider the second term which actually gives a weaker contribution. Since $\Psi(\lambda)\Phi_n'(\lambda)$ vanishes at $\pm 2\sqrt{q}$, then integrating by parts we have
\begin{equation}\label{e:errorder}
\int_{-2\sqrt{q}}^{2\sqrt{q}}\ee^{\ii t\lambda}\Psi(\lambda)\Phi_n'(\lambda)\,\dd \lambda = \frac{-1}{\ii t}\int_{-2\sqrt{q}}^{2\sqrt{q}}\ee^{\ii t\lambda}\{\Psi(\lambda)\Phi_n'(\lambda)\}'\,\dd\lambda\,.
\end{equation}

Recalling \eqref{e:psi'} we see that $\Psi'(\lambda) = \frac{\lambda(6q-q^2-1-\lambda^2)}{((q+1)^2-\lambda^2)(4q-\lambda^2)}\Psi(\lambda)$. Hence,
\[
\{\Psi(\lambda)\Phi_n'(\lambda)\}' = \Psi(\lambda)\left(\frac{\lambda(6q-q^2-1-\lambda^2)}{((q+1)^2-\lambda^2)(4q-\lambda^2)}\Phi_n'(\lambda)+\Phi_n''(\lambda)\right)\,.
\]
Since $\Phi_n(\lambda)$ is a polynomial of degree $n$ in $\lambda$ and $\Psi(\lambda)$ does not vanish for $|\lambda|<2\sqrt{q}$, we see that $\{\Psi(\lambda)\Phi_n'(\lambda)\}'$ has the same roots as a polynomial of degree $\le n+2$. Consequently the total variation $\int_{-2\sqrt{q}}^{2\sqrt{q}} |\{\Psi(\lambda)\Phi_n'(\lambda)\}'|\,\dd\lambda \le 2(n+2)\|\Psi \Phi_n'\|_{\infty}$. Clearly $|\Psi(\lambda)|\le \frac{(q+1)\sqrt{q}}{(q-1)^2}$. For $\Phi_n'$, we have
\[
\Phi_n'(\lambda) = \frac{q^{-n/2}}{2\sqrt{q}(q+1)}\bigg(2P_n'\Big(\frac{\lambda}{2\sqrt{q}}\Big)+(q-1)Q_n'\Big(\frac{\lambda}{2\sqrt{q}}\Big)\bigg)\,.
\]

On the other hand, over $[-1,1]$, we have $|P_n(x)|\le 1$. This follows by definition $P_n(x)=\cos(n\arccos x)$. Using the relation
\begin{equation}\label{e:cheby2}
Q_n=\begin{cases} 2 \ds \sum_{j=0}^m P_{2j+1} & \text{if } n=2m+1,\\ 2 \ds \sum_{j=0}^m P_{2j} -1 &\text{if } n=2m, \end{cases}
\end{equation}
we deduce that $|Q_n(x)|\le n+1$. The bounds are attained at $x=1$~: we have $P_n(1)=1$ and $Q_n(1)=n+1$.

We now use the classic identity $P_n'(x)=nQ_{n-1}(x)$. This implies $|P_n'(x)| \le n^2$. Using \eqref{e:cheby2}, we deduce that $|Q_n'(x)|\le (n+1)n^2$.

We thus conclude that $|\Phi_n'(\lambda)|\le \frac{n^2q^{-n/2}}{2\sqrt{q}(q+1)}\left(2+(n+1)(q-1)\right)$.

Gathering the estimates, we have shown that
\[
\int_{-2\sqrt{q}}^{2\sqrt{q}}|\{\Psi(\lambda)\Phi_n'(\lambda)\}'|\,\dd\lambda \le 2(n+2)\frac{n^2q^{-n/2}}{2(q-1)^2}\left(2+(n+1)(q-1)\right).
\]
This clearly vanishes as $n\To\infty$, so it is uniformly bounded by some $C_q$ for all $n$. Hence, recalling \eqref{e:errorder}, we have
\begin{equation}\label{e:errorder2}
\int_{-2\sqrt{q}}^{2\sqrt{q}}\ee^{\ii t\lambda}\Psi(\lambda)\Phi_n'(\lambda)\,\dd \lambda = O(t^{-1})
\end{equation}
uniformly in $n$.

Let us finally turn back to \eqref{e:genkerA}. It remains to control $\int_{-2\sqrt{q}}^{2\sqrt{q}}\ee^{\ii t\lambda}\Psi'(\lambda)\Phi_n(\lambda)\,\dd\lambda$. For this, as in the case of $w=v$, we consider the change of variables $\lambda=2\sqrt{q}\cos\theta$. In view of \eqref{e:psi'}, this gives
\[
\int_{-2\sqrt{q}}^{2\sqrt{q}}\ee^{\ii t\lambda}\Psi'(\lambda)\Phi_n(\lambda)\,\dd\lambda = \int_0^\pi\ee^{2\ii t\sqrt{q}\cos\theta} g(\theta)\Phi_n(2\sqrt{q}\cos\theta)\,\dd\theta
\]
with $g(\theta)$ defined by \eqref{e:g}.
We estimate the oscillatory integral exactly as before using the method of stationary phase. Again the phase function $\phi(\theta)=2\sqrt{q}\cos\theta$ only has critical points at $0,\pi$. We conclude as in \eqref{e:statpha} that
\begin{multline}\label{e:red}
\int_{-2\sqrt{q}}^{2\sqrt{q}}\ee^{\ii t\lambda}\Psi'(\lambda)\Phi_n(\lambda)\,\dd\lambda \\
= \Big(\frac{\pi}{\sqrt{q}\,t}\Big)^{1/2}\cdot \frac{\ee^{\frac{-\ii \pi}{4}}\ee^{2\ii \sqrt{q} t}g(0)\Phi_n(2\sqrt{q}\cos 0) + \ee^{\frac{\ii \pi}{4}}\ee^{-2\ii \sqrt{q} t}g(\pi)\Phi_n(2\sqrt{q}\cos\pi)}{2} + O(t^{-1})\,.
\end{multline}

The term $O(t^{-1})$ is uniform in $n$. Indeed, consider the error from Corollary~\ref{cor:phase} at $\theta=0$; the error at $\pi$ is similar. First, $\frac{2|q(a)|}{\sqrt{2|p''(a)|}\sqrt{|p(b)-p(a)|}}=\frac{|g(0)\Phi_n(2\sqrt{q})|}{\sqrt{2q}}\le \tilde{c}_q nq^{-n/2}$ which is uniformly bounded in $n$. Next, $Q_{1,1}(\theta) = \frac{g(\theta)\Phi_n(2\sqrt{q}\cos\theta)}{2\sqrt{q}\sin\theta}-\frac{g(0)\Phi_n(2\sqrt{q})}{2\sqrt{2q}\sqrt{1-\cos\theta}}$. Simplifying as before yields
\begin{multline}\label{e:q11gen}
Q_{1,1}(\theta)=\frac{q+1}{((q-1)^2+4q\sin^2\theta)^2(q-1)^2}\Big[\frac{(q-1)^4[\Phi_n(2\sqrt{q})\cos\frac{\theta}{2}-\Phi_n(2\sqrt{q}\cos\theta)\cos\theta]}{2\sin\theta}\\
+ 2q(q-1)^2\sin\theta\big[\Phi_n(2\sqrt{q}\cos\theta)\cos\theta+2\Phi_n(2\sqrt{q})\cos\frac{\theta}{2}\big]+8\Phi_n(2\sqrt{q})q^2\sin^3\theta\cos\frac{\theta}{2}\Big].
\end{multline}

As shown before \eqref{e:q11comb}, we have $\frac{\cos\frac{\theta}{2}-\cos\theta}{\sin\theta}=\frac{\tan\frac{\theta}{4}+\tan\frac{\theta}{2}}{2}$. Hence,
\begin{multline}\label{e:simp}
\frac{\Phi_n(2\sqrt{q})\cos\frac{\theta}{2}-\Phi_n(2\sqrt{q}\cos\theta)\cos\theta}{2\sin\theta} \\
= \frac{\Phi_n(2\sqrt{q})}{4}\Big(\tan\frac{\theta}{4}+\tan\frac{\theta}{2}\Big)-\cos\theta\frac{\Phi_n(2\sqrt{q}\cos\theta)-\Phi_n(2\sqrt{q})}{2\sin\theta}\,.
\end{multline}

On the other hand, $\frac{\Phi_n(2\sqrt{q}\cos\theta)-\Phi_n(2\sqrt{q})}{\sin\theta}= \frac{\theta \Phi_n'(2\sqrt{q}\cos\theta_1)(-2\sqrt{q}\sin\theta_1)}{2\sin\theta}$ for some $\theta_1\in (0,\theta)$.

Gathering \eqref{e:q11gen} and \eqref{e:simp}, we see that $Q_{1,1}(0)=0$ and $|Q_{1,1}(\frac{\pi}{2})|\le  C_qp(n)q^{-n/2}$, where $p(n)$ is polynomial in $n$. To control $V_{0,\frac{\pi}{2}}(Q_{1,1})$, it suffices to show that the derivative of \eqref{e:simp} is similarly bounded by some $\tilde{C}_q\tilde{p}(n)q^{-n/2}$ (by \eqref{e:q11gen} and our previous bounds on $\Phi_n,\Phi_n'$). In turn, we should estimate $(\frac{\Phi_n(2\sqrt{q}\cos\theta)-\Phi_n(2\sqrt{q})}{\sin\theta})' = -2\sqrt{q}\Phi_n'(2\sqrt{q}\cos\theta) - \frac{\cos\theta[\Phi_n(2\sqrt{q}\cos\theta)-\Phi_n(2\sqrt{q})]}{\sin^2\theta}$. This is indeed bounded as desired (for the latter term note that $\frac{\theta\sin\theta_1}{\sin^2\theta}\To 1$ as $\theta\downarrow 0$).

We have shown the error from Corollary~\ref{cor:phase} takes the form $t^{-1}c_qp(n)q^{-n/2}$ for some polynomial $p(n)$, it is thus $O(t^{-1})$ uniformly in $n$ as required. 

\medskip

Back to \eqref{e:red}, we have $g(\pi)=-g(0)$ while $\Phi_n(-2\sqrt{q}) = (-1)^n\Phi_n(2\sqrt{q})$. Hence,
\[
\int_{-2\sqrt{q}}^{2\sqrt{q}}\ee^{\ii t\lambda}\Psi'(\lambda)\Phi_n(\lambda)\,\dd\lambda = \begin{cases} \ii\Big(\frac{\pi}{\sqrt{q}\,t}\Big)^{1/2}\sin\left(2\sqrt{q}t-\frac{\pi}{4}\right)\cdot g(0)\Phi_n(2\sqrt{q}) +O(t^{-1})& \text{if } n \text{ is even},\\ \left(\frac{\pi}{\sqrt{q}\,t}\right)^{1/2}\cos\left(2\sqrt{q}t-\frac{\pi}{4}\right)\cdot g(0)\Phi_n(2\sqrt{q}) +O(t^{-1}) & \text{if } n \text{ is odd}.\end{cases}
\]
As $g(0) = \frac{-\sqrt{q}(q+1)}{(q-1)^2}$, $\Phi_n(2\sqrt{q}) = \frac{q^{-n/2}}{q+1}(2+(n+1)(q-1))$, $\cos(2\sqrt{q}t-\frac{\pi}{4}) = \sin(2\sqrt{q}t+\frac{\pi}{4})$ then recalling \eqref{e:genkerA}, \eqref{e:errorder2}, we obtain precisely the expression given in Theorem~\ref{thm:maincomb}.

The dispersive estimate \eqref{e:maindis} follows immediately since $\frac{q^{\frac{-n}{2}+\frac{1}{4}}(2+(n+1)(q-1))}{(q-1)^2}\To 0$ as $n\To\infty$, so it can be uniformly bounded by some $c_q$ for all $n$.

\section{The case of quantum trees}\label{sec:quancase}

We now move to quantum graphs; our aim is to prove Theorem~\ref{thm:mainquan}. The basic ideas are the same as the previous section~: 
\begin{itemize}
\item The Schwartz kernel of the evolution operator is essentially the Fourier transform of the spectral quantity $f_{x,y}(\lambda)= \Im G^{\lambda+\ii 0}(x,y)$.
\item Integrating once by parts gives a first decay of $t^{-1}$, multiplied by the Fourier transform of $\partial_{\lambda} f_{x,y}(\lambda)$. After a change of variable, the stationary phase lemma shows this last Fourier transform is $\asymp t^{-1/2}$. So in total we get the speed $t^{-3/2}$.
\end{itemize} 

The proof however takes a lot more effort here. The difficulty is not really in guessing the leading term, it is instead in controlling the error term $O(t^{-2})$ carefully. Like the previous section, having a rather explicit error is useful to get a control uniform in $x,y$. But in addition to this, because we have infinitely many spectral bands here, we get infinite series of error terms upon applying the stationary phase lemma to each spectral integral, so we should show these series of errors do converge.

\subsection{Preliminary spectral analysis} \label{sec:prelimspe}

Consider the solutions of $q\mu^2-w(z)\mu+1=0$~:

\[
\mu^\pm(z) = \frac{w(z)\pm \sqrt{w(z)^2-4q}}{2q}\,.
\]

As we will see, they arise in the computation of the Green's function of $H_{\mathbf{T}_q}$.

\begin{rem}\label{rem:rootchoice}
If $w(z)^2-4q = r\ee^{\ii \phi}$, we should explain if $\sqrt{w(z)^2-4q}$ denotes $\sqrt{r}\ee^{\ii\phi/2}$, the root with positive imaginary part, or $-\sqrt{r}\ee^{\ii\phi/2}$, the one with negative imaginary part.

Note that $q\mu^+(z)\mu^-(z)=1$. If $z\in \C^+$, then one solution has modulus $<\frac{1}{\sqrt{q}}$ while the other is $>\frac{1}{\sqrt{q}}$. As in \cite[Section 3]{Carl}, we always choose the branch of the square root such that $|\mu^-(z)|<\frac{1}{\sqrt{q}}$. For example, if $W=\alpha=0$, $q=1$ and $L=1$, we have $\mu^-(z) = \cos\sqrt{z} - \sqrt{-\sin^2(\sqrt{z})}$. Then with this convention $\mu^-(z)=\cos\sqrt{z}+\ii\sin\sqrt{z}=\ee^{\ii \sqrt{z}}$ is the correct choice, as $|\ee^{\ii \sqrt{z}}|<1$ for $z\in \C^+$.

In general, one has $|\mu^-(z)|^2<\frac{1}{q}\iff \Re w(z)\overline{\sqrt{w(z)^2-4q}}>0$. One easily checks that to have this inequality, when $w(z)\in \C^+$, we should choose the branch with positive imaginary part, and when $w(z)\in \C^-$ we choose the branch with negative imaginary part. This is coherent with the above special case, since the root $-\ii \sin\sqrt{z}$ has positive imaginary part iff $\cos\sqrt{z}$ has.

If $\lambda\in \R$, we denote $\mu^{\pm}(\lambda):=\lim_{\eta \downarrow 0}\mu^{\pm}(\lambda+\ii\eta)$. Note that if $\lambda\in \R$, then $w(\lambda)\in \R$. If moreover $|w(\lambda)|\le 2\sqrt{q}$, then $|\mu^{\pm}(\lambda)| = \frac{1}{\sqrt{q}}$.
\end{rem}

It can be shown \cite{Carl,ISW} that the resolvent kernel is given by
\begin{equation}\label{e:goo}
G^z_{\mathbf{T}_q}(o,o) = \frac{-s(z)}{(q+1)\mu^-(z)-w(z)}
\end{equation}
at any vertex $o$. If $x,y$ are in the same edge $e$, then
\begin{multline}\label{e:diffsame}
G^z_{\mathbf{T}_q}(x,y) = \frac{G^z_{\mathbf{T}_q}(o,o)}{s^2(z)}\big(S_z(L-x)S_z(L-y)+S_z(x)S_z(y)\\+\mu^-(z)S_z(L-x)S_z(y)+q\mu^+(z)S_z(x)S_z(L-y)\big)\,.
\end{multline}
If $x,y$ are in different edges a distance $n$ apart, then
\begin{equation}\label{e:diffdiffq}
G^z_{\mathbf{T}_q}(x,y) = \big(\mu^-(z)\big)^nG^z_{\mathbf{T}_q}(x,y')\,,
\end{equation}
where $y'$ is the point $y$ moved to the same edge as $x$, in the respective position.

All limits
\begin{equation}\label{e:grinlim}
G^\lambda_{\mathbf{T}_q}(x,y):=\lim_{\eta\downarrow 0}G^{\lambda+\ii\eta}_{\mathbf{T}_q}(x,y)
\end{equation}
exist if $\lambda\in \sigma_{ac}(H)$. Moreover $\lambda\mapsto G^{\lambda}(x,y)$ is analytic since all functions given above are analytic by classical ODE results.

The spectrum of $H$ is given as follows~: let
\[
\sigma_D = \{\lambda\in \R:s(\lambda)=0\}
\]
be the \emph{Dirichlet values}, i.e. $\sigma_D=\{(\frac{n\pi}{L})^2\}_{n\ge 1}$ in the special case $W=0$. Define
\[
\sigma_{ac} = \{\lambda\in \R : |w(\lambda)|\le 2\sqrt{q}\}\,.
\]

Then \cite{Carl} $\sigma_{ac}$ consists of AC spectrum and
\[
\sigma(H) = \sigma_{ac}\cup \sigma_D\,.
\]

Moreover $\sigma_{ac}\cap \sigma_D = \emptyset$ whenever $q\ge 2$. If we denote the $n$-th Dirichlet value by $\delta_n$, $n\ge 1$, we know that in each of the intervals $(-\infty,\delta_1)$, $(\delta_n,\delta_{n+1})$, for any $E\in [-2\sqrt{q},2\sqrt{q}]$, there is a unique $\lambda$ such that $w(\lambda)=E$. We also know that $\partial_{\lambda}w(\lambda)\neq 0$ on $\sigma_{ac}$, see \cite[Theorem 4.3]{Carl}.

Summarizing, the preceding considerations imply the following picture~:
\begin{equation}\label{e:acspec}
\sigma_{ac} = \cup_{n\ge 1} I_n\,,
\end{equation}
between the bands of AC spectrum $I_n$ and $I_{n+1}$ lies the eigenvalue $\delta_n$ (disjoint from them). Moreover, $w(I_n) = [-2\sqrt{q},2\sqrt{q}]$ for any $n$, and $w$ is either strictly increasing or strictly decreasing on $I_n$. To complete the picture, it is useful to remember that in general the $n$-th Dirichlet value $\delta_n$ is given by
\begin{equation}\label{e:dirval}
\delta_n = \Big(\frac{n\pi}{L}\Big)^2+O(1) \,,
\end{equation}
see \cite[Theorem 4, p.35]{PT87}. In the special case $W=0$, we have $I_n = [(\frac{(n-1)\pi+\theta}{L})^2,(\frac{n\pi-\theta}{L})^2]$, where $\theta=\arccos \frac{2\sqrt{q}}{q+1}$.

Let $\mathbf{1}_{ac}(H)$ be the orthogonal projection onto $\mathscr{H}_{ac}$. We now verify that $\ee^{\ii tH}\mathbf{1}_{ac}(H)$ is an integral operator with kernel given by
\begin{equation}\label{e:contker}
\ee^{\ii tH}\mathbf{1}_{ac}(H)(x,y)=\frac{1}{\pi}\int_{\sigma_{ac}(H)}\ee^{\ii t\lambda}\Im G^{\lambda}_{\mathbf{T}_q}(x,y)\,\dd\lambda\,.
\end{equation}
Recall that we denote $G^\lambda_{\mathbf{T}_q}(x,y):= \ds \lim_{\eta\downarrow 0}G^{\lambda+\ii\eta}_{\mathbf{T}_q}(x,y)$ for $\lambda\in\sigma_{ac}(H)$.

\begin{lem}
We have $(\ee^{\ii tH}\mathbf{1}_{ac}(H)\phi)(x)=\int_{\mathbf{T}_q}\left(\frac{1}{\pi}\int_{\sigma_{ac}}\ee^{\ii t\lambda}\Im G^{\lambda}_{\mathbf{T}_q}(x,y)\,\dd\lambda\right)\phi(y)\,\dd y$ for any $\phi$ of compact support in $\mathbf{T}_q$.
\end{lem}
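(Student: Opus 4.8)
The plan is to establish the integral representation \eqref{e:contker} via the spectral theorem, reducing everything to the scalar spectral measures $\mu_{x,y}$ associated with the vectors $\delta_x, \delta_y$ — or rather, since we are in the continuum, with approximate delta functions. More precisely, first I would recall that by the spectral theorem applied to the self-adjoint operator $H$, for any $\phi,\psi \in \mathscr{H}$ one has $\langle \psi, \ee^{\ii tH}\mathbf{1}_{ac}(H)\phi\rangle = \int_{\sigma_{ac}} \ee^{\ii t\lambda}\,\dd\mu_{\psi,\phi}^{ac}(\lambda)$, where $\mu_{\psi,\phi}^{ac}$ is the absolutely continuous part of the complex spectral measure. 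The point is then to identify the density of this measure with $\frac{1}{\pi}\Im G^{\lambda}_{\mathbf{T}_q}(x,y)$ in an appropriate distributional/weak sense when $\psi,\phi$ are localized near $x,y$.

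The key step is Stone's formula: for any interval $(a,b)$ with endpoints outside the pure point spectrum,
\[
\mathbf{1}_{(a,b)}(H) = \lim_{\eta\downarrow 0}\frac{1}{2\pi\ii}\int_a^b \big[(H-\lambda-\ii\eta)^{-1} - (H-\lambda+\ii\eta)^{-1}\big]\,\dd\lambda = \lim_{\eta\downarrow 0}\frac{1}{\pi}\int_a^b \Im (H-\lambda-\ii\eta)^{-1}\,\dd\lambda\,,
\]
understood in the strong sense. Testing this against $\phi$ of compact support and pairing with the point evaluation at $x$ (which makes sense because the resolvent of $H$ has a jointly continuous integral kernel $G^z_{\mathbf{T}_q}(x,y)$, as recorded in \eqref{e:goo}--\eqref{e:diffdiffq}), one obtains that the AC part of the spectral projector acts as an integral operator whose kernel is $\frac{1}{\pi}\Im G^{\lambda}_{\mathbf{T}_q}(x,y)\,\dd\lambda$ on $\sigma_{ac}$. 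The boundary limits $G^\lambda_{\mathbf{T}_q}(x,y) = \lim_{\eta\downarrow 0} G^{\lambda+\ii\eta}_{\mathbf{T}_q}(x,y)$ exist and are locally bounded on $\sigma_{ac}$ by the analyticity statement following \eqref{e:grinlim}, and the contribution of $\sigma_D$ drops out since it is countable (Lebesgue-null) and carries only point spectrum, so $\mathbf{1}_{ac}(H)$ sees only $\sigma_{ac}$. One then writes $\ee^{\ii tH}\mathbf{1}_{ac}(H) = \int \ee^{\ii t\lambda}\,\dd E_\lambda^{ac}$ and uses dominated convergence (the integrand $\ee^{\ii t\lambda}\Im G^\lambda(x,y)$ is bounded on each band and the bands can be truncated, controlling the tail by the decay in the band index coming from $|\mu^-(\lambda)|=q^{-1/2}$ on $\sigma_{ac}$, cf. \eqref{e:diffdiffq}) to pass the limit $\eta\downarrow 0$ inside the $\lambda$-integral and to justify interchanging the spatial integral against $\phi$ with the spectral integral via Fubini, legitimate because $\phi$ has compact support and $G^{\lambda+\ii\eta}$ is bounded on compacts uniformly for small $\eta$.

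I expect the main obstacle to be the justification of evaluating Stone's formula \textbf{pointwise at $x$} — i.e., turning the operator identity into a statement about Schwartz kernels and in particular showing that the weak-operator limit $\eta\downarrow 0$ may be taken inside both the spatial integration against $\phi(y)\,\dd y$ and the spectral integration $\dd\lambda$. This requires a locally uniform (in $\eta$) bound on $G^{\lambda+\ii\eta}_{\mathbf{T}_q}(x,y)$ for $\lambda$ in a compact subset of $\sigma_{ac}$ and $\eta\in(0,1]$, which one extracts from the explicit formulas \eqref{e:goo}, \eqref{e:diffsame}, \eqref{e:diffdiffq} together with $|\mu^-(z)|<q^{-1/2}$ and the non-vanishing of the denominator $(q+1)\mu^-(z)-w(z)$ on the closure of $\sigma_{ac}$ away from band edges (and a separate, milder argument handling neighborhoods of the band edges, where $\Im G^\lambda$ remains bounded even though $w'$-type quantities degenerate). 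Once this local boundedness is in hand, Fubini and dominated convergence finish the argument, and the countability of $\sigma_D$ ensures no contribution is lost by restricting the $\lambda$-integral to $\sigma_{ac}$.
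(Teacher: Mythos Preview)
Your plan is correct and follows the same underlying idea as the paper: identify the absolutely continuous spectral density with $\frac{1}{\pi}\Im G^{\lambda+\ii 0}$ via Stone's formula (equivalently, the Borel transform of the spectral measure), then apply Fubini using the explicit Green's function formulas and compact support of $\phi$.

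The one organizational difference worth noting is how the off-diagonal identity is reached. You propose to evaluate Stone's formula pointwise at $x$ and correctly flag this as the delicate step, planning to justify it through locally uniform bounds on $G^{\lambda+\ii\eta}(x,y)$ extracted from \eqref{e:goo}--\eqref{e:diffdiffq}. The paper instead stays at the level of bilinear forms $\langle f,G^z g\rangle$ with $f,g\in L^2$ of compact support: it first establishes $\dd\mu_f(\lambda)=\frac{1}{\pi}\Im\langle f,G^\lambda f\rangle\,\dd\lambda$ on the diagonal (citing a standard Herglotz/Borel-transform fact), then passes to $\mu_{f,g}$ by the polarization identity $\langle f+g,A(f+g)\rangle-\langle f-g,A(f-g)\rangle = 2\langle f,Ag\rangle+2\langle g,Af\rangle$ together with the symmetry $G^\lambda(x,y)=G^\lambda(y,x)$, which forces $\mu_{f,g}=\mu_{g,f}$. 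Only after the density is identified in this weak sense does the paper read off the kernel by Fubini. This sidesteps your ``point evaluation'' obstacle entirely and needs no uniform-in-$\eta$ kernel bounds; the price is the small extra bookkeeping of polarization and the symmetry argument. Either route is sound here.
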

\begin{proof}
We have by the spectral theorem
\[
\langle f,\ee^{\ii tH}\mathbf{1}_{ac}(H) g\rangle = \int_{\sigma_{ac}(H)}\ee^{\ii t\lambda}\,\dd\mu_{f,g}(\lambda)\,,
\]
where $\mu_{f,g}$ is the spectral measure at $f,g\in L^2(\mathbf{T}_q)$. We may assume $f,g\in \mathscr{H}_{ac}$, so $\mu_{f,g}$ is absolutely continuous. We claim the density is given by
\begin{equation}\label{e:muac}
\dd\mu_{f,g}(\lambda) = \frac{1}{\pi} \Im \langle f,G^{\lambda}_{\mathbf{T}_q}g\rangle\,\dd\lambda\,.
\end{equation}
In fact this holds for $f=g$ using \cite[Theorem 1.6(iv)]{SimSchro} and the fact that $\langle f,G^z_{\mathbf{T}_q}g\rangle = \int\frac{\dd\mu_{f,g}(x)}{x-z}$ is the Borel transform of $\mu_{f,g}$.
Using the identity
\begin{equation}\label{e:nani}
\langle f+g,A(f+g)\rangle -\langle f-g,A(f-g)\rangle = 2\langle f,A g\rangle + 2\langle g,A f\rangle
\end{equation}
with $A=G^\lambda$ and using that $G^\lambda(x,y)$ is symmetric in $x,y$ by construction, which implies that $\langle f,G^\lambda g\rangle=\langle g,G^\lambda f\rangle$ for real $f,g$, we get
\begin{align*}
\frac{1}{\pi}\int_I \Im \langle f,G^\lambda g\rangle\,\dd\lambda &= \frac{1}{\pi}\int_I\frac{\Im \langle f+g,G^\lambda (f+g)\rangle - \Im\langle f-g,G^\lambda (f-g)\rangle}{4}\,\dd\lambda \\
&= \frac{\mu_{f+g}(I)-\mu_{f-g}(I)}{4} = \frac{\mu_{f,g}(I)+\mu_{g,f}(I)}{2}\,,
\end{align*}
where $\mu_\phi:=\mu_{\phi,\phi}$ and we used \eqref{e:nani} with $A=\mathbf{1}_I(H)$ in the last equality. But the symmetry of $G^{z}(x,y)$ implies $\mu_{f,g}=\mu_{g,f}$, since $\frac{\mu_{f,g}[a,b]+\mu_{f,g}(a,b)}{2}=\lim_{\eta\downarrow 0}\frac{1}{\pi}\int_a^b\Im \langle f,G^{\lambda+\ii\eta}g\rangle\,\dd\lambda$ by Fubini's theorem. This completes the proof of \eqref{e:muac}. It follows that 
\[
\langle f,\ee^{\ii tH}\mathbf{1}_{ac}(H)g\rangle = \frac{1}{\pi}\int_{\sigma_{ac}(H)}\ee^{\ii t\lambda}\Im \langle f,G^\lambda_{\mathbf{T}_q} g\rangle \,\dd\lambda\,,
\]
so for real-valued $f,g$ of compact support,
\[
\langle f,\ee^{\ii tH}\mathbf{1}_{ac}(H)g\rangle =\frac{1}{\pi}\int_{\sigma_{ac}(H)}\int_{\mathbf{T}_q\times\mathbf{T}_q}\ee^{\ii t\lambda}f(x)g(y)\Im G^{\lambda}_{\mathbf{T}_q}(x,y)\,\dd\lambda\,\dd x\dd y\,.
\]
The same holds for complex-valued $f,g$ by expanding $f=f_1+\ii f_2$, $g=g_1+\ii g_2$, replacing $f(x)$ by $\overline{f(x)}$ in the end. This completes the proof.
\end{proof}

\begin{rem}\label{rem:kernel}
The lemma extends to any $f\in L^1(\mathbf{T}_q)\cap L^2(\mathbf{T}_q)$. In fact, if $F(H)=\ee^{\ii tH}\mathbf{1}_{ac}(H)$ and $(f_j)$ have compact support and tend to $f$ in $L^1\cap L^2$ (take e.g. $f_j = f\mathbf{1}_{\Lambda_j}$ for a sequence of growing cubes $\Lambda_j$), we get $|\int F(H)(x,y)f(y)\dd y-\int F(H)(x,y)f_j(y)\dd y|\le \|F(H)(\cdot,\cdot)\|_{\infty}\|f-f_j\|_1 \To 0$ provided the kernel is uniformly bounded. On the other hand $F(H)f_j\To F(H)f$ in $L^2$, so up to extracting a subsequence it also converges almost everywhere. Thus, $[F(H)f](x) = \lim_j [F(H)f_j](x) = \lim_j \int F(H)(x,y)f_j(y)\dd y = \int F(H)(x,y) f(y)\dd y$ as required. In particular, $\|F(H)f\|_{\infty}\le \|F(H)(\cdot,\cdot)\|_{\infty}\|f\|_1$ for any $f\in L^1(\mathbf{T}_q)\cap L^2(\mathbf{T}_q)$. We shall hence focus on the analysis of the kernel, the dispersive estimate will follow.
\end{rem}   

We now calculate $\Im G_{\mathbf{T}_q}^{\lambda}(o,o)$ for $\lambda\in \sigma_{ac}$. Note that both $s(\lambda),w(\lambda)\in \R$ for real $\lambda$. So if $z=\lambda+\ii\eta$, then taking $\eta\downarrow 0$ in \eqref{e:goo}, we have $\Im G^{\lambda}_{\mathbf{T}_q}(o,o)=\frac{(q+1)s(\lambda)\Im \mu^{-}(\lambda)}{|(q+1)\mu^-(\lambda)-w(\lambda)|^2}$.

Concerning $\mu^-(z)$, recall by Remark~\ref{rem:rootchoice} that the choice of the branch depends on $\Im w(z)$. For $\eta>0$ small enough, we have $w(\lambda+\ii\eta) = w(\lambda) +\ii\eta w'(\lambda) +o(\eta^2)$, implying that $\sgn \Im w(\lambda+\ii\eta) = \sgn w'(\lambda)$. Hence, we choose the positive branch of the square root if $\sgn w'(\lambda)=1$ and the negative branch otherwise.

For $\lambda\in \sigma_{ac}$, we have by definition $|w(\lambda)|\le 2\sqrt{q}$. Hence, the previous considerations imply that $\mu^-(\lambda) = \frac{w(\lambda)-\ii\eps_{\lambda}\sqrt{4q-w(\lambda)^2}}{2q}$, where $\eps_{\lambda}=\sgn w'(\lambda)$ and $\sqrt{4q-w(\lambda)^2}\in [0,2\sqrt{q})$ is the usual nonnegative square root. Moreover,
\begin{multline*}
|(q+1)\mu^-(\lambda)-w(\lambda)|^2 = \Big|(q+1)\frac{w(\lambda)-\ii\eps_{\lambda}\sqrt{4q-w(\lambda)^2}}{2q}-w(\lambda)\Big|^2 \\
= w(\lambda)^2\Big(\frac{q+1}{2q}-1\Big)^2+(q+1)^2\frac{4q-w(\lambda)^2}{(2q)^2}=\frac{(q+1)^2-w(\lambda)^2}{q}\,.
\end{multline*}
Hence,
\begin{equation}\label{e:imoo}
\Im G^{\lambda}_{\mathbf{T}_q}(o,o) = \frac{(q+1)s(\lambda)\Im \mu^{-}(\lambda)}{|(q+1)\mu^-(\lambda)-w(\lambda)|^2} = \frac{-s(\lambda)\eps_{\lambda}(q+1)\sqrt{4q-w(\lambda)^2}}{2[(q+1)^2-w(\lambda)^2]}\,.
\end{equation}

In other words, if $\Psi(\lambda) = \Im G^\lambda_{\cA}(o,o) = \frac{(q+1)\sqrt{4q-\lambda^2}}{2[(q+1)^2-\lambda^2]}$ as in Section~\ref{sec:comb}, and if we now put $\Psi_1(\lambda) = \Im G^\lambda_{\mathbf{T}_q}(o,o)$, we get
\[
\Psi_1(\lambda)=-\eps_{\lambda}s(\lambda)\Psi(w(\lambda))\,.
\]

To find the sign of $w'(\lambda)$, we note that $w$ has a kind of weak periodicity. More precisely, by \cite[Theorem 4.3]{Carl}, $w(\delta_n)=(-1)^n(q+1)$. In particular $w(\delta_{2n})=(q+1)$. Now $\lambda\mapsto w(\lambda)$ is analytic and bounded on $\R$, so $w'(\lambda)$ has a discrete set of zeroes and $w$ is strictly monotone in between. Since $\cup_{k\ge 1} I_k=\{\lambda:|w(\lambda)|\le 2\sqrt{q}\}$ and $I_{2n}=[a_{2n},b_{2n}]$ lies on the left of $\delta_{2n}$, we must have $w(b_{2n})=2\sqrt{q}$, so $w(a_{2n})=-2\sqrt{q}$. Similarly $w(a_{2n+1})=2\sqrt{q}$ and $w(b_{2n+1})=-2\sqrt{q}$. Hence, $|w'(\lambda)| = (-1)^nw'(\lambda)$ on $I_n$. Thus, for $\lambda\in I_n$,
\begin{equation}\label{e:psi1}
\Psi_1(\lambda)=(-1)^{n+1}s(\lambda)\Psi(w(\lambda))\,.
\end{equation}

As one may expect\footnote{The nonnegativity of $\Psi_1$ is a general fact~: for any quantum graph, $G^{z}(x,x)$ is a Herglotz function. While this is an immediate consequence of the spectral theorem in case of discrete graphs, for quantum graphs the statement is nontrivial; see \cite[Lemma A.8]{AISW2} for a proof.}, $\Psi_1(\lambda)\ge 0$ over $\sigma_{ac}$. In fact, $s(\delta_n)=0$ and $\partial_{\lambda}s(\delta_n)\neq 0$ by \cite[p.30]{PT87}, so $s(\lambda)$ changes sign when crossing $\delta_n$. It is shown in \cite[Section 4.3]{AISW} that $s(\lambda)>0$ for $\lambda<\delta_1$. In particular, $s(\lambda)>0$ on $I_1$, so $\sgn s(\lambda) = (-1)^{n+1}$ on $I_n$.

\subsection{Handling diagonal terms}\label{sec:diag}

Let us turn back to the kernel \eqref{e:contker}. Analogously to the discrete case, \eqref{e:diffsame} and \eqref{e:diffdiffq} suggest that we first prove Theorem~\ref{thm:mainquan} for $x=y=o$, then generalize later. Most of the technical difficulties already appear here. 

In view of \eqref{e:acspec} and the comments thereafter, we have $w(\lambda)=\pm 2\sqrt{q}$ at the endpoints of the bands $I_n$. Hence, $\Psi_1(\lambda) = 0$ at these endpoints. Integrating by parts thus yields
\begin{align*}
\ee^{\ii tH}\mathbf{1}_{ac}(H)(o,o) &= \frac{-1}{\pi \ii t}\int_{\sigma_{ac}(H)}\ee^{\ii t\lambda}\Psi_1'(\lambda)\,\dd\lambda\\
&=\frac{-\ii}{\pi t}\sum_{n=1}^{\infty}(-1)^n\int_{I_n}\ee^{\ii t \lambda}[s'(\lambda)\Psi(w(\lambda)) + s(\lambda)\Psi'(w(\lambda))w'(\lambda)]\,\dd\lambda\\
&=\frac{-\ii}{\pi t}\sum_{n=1}^{\infty}(-1)^n\int_{I_n}\ee^{\ii t \lambda}\Big[\frac{s'(\lambda)\Psi(w(\lambda))}{w'(\lambda)} + s(\lambda)\Psi'(w(\lambda))\Big]w'(\lambda)\,\dd\lambda
\end{align*}
using \eqref{e:psi1}. At this point it is natural to consider the change of variables $w(\lambda)=2\sqrt{q}\cos\theta$ (recall the discussion around \eqref{e:acspec}). We showed above that $|w'(\lambda)| = (-1)^nw'(\lambda)$ on $I_n$. Letting $w_n$ be the restriction of $w$ to $I_n$, it follows that
\begin{multline}\label{e:anothereq}
\ee^{\ii tH}\mathbf{1}_{ac}(H)(o,o) = \frac{-\ii}{\pi t}\sum_{n=1}^\infty\int_0^\pi\ee^{\ii t w_n^{-1}(2\sqrt{q}\cos\theta)}\\
\bigg[\frac{s'(w_n^{-1}(2\sqrt{q}\cos\theta))(q+1)4q\sin^2\theta}{2[(q+1)^2-4q\cos^2\theta]w'(w_n^{-1}(2\sqrt{q}\cos\theta))}
+ s(w_n^{-1}(2\sqrt{q}\cos\theta))g(\theta)\bigg]\,\dd\theta\,,
\end{multline}
with $g(\theta)$ given by \eqref{e:g}. The phase function is now $\phi_n(\theta)=w_n^{-1}(2\sqrt{q}\cos\theta)$. So $\phi'_n(\theta) = \frac{-2\sqrt{q}\sin\theta}{w'(w_n^{-1}(2\sqrt{q}\cos\theta))}$. This suggests the first integrand already decays faster than required. In fact, if $h_n(\theta)=\frac{s'(\phi_n(\theta))(q+1)2\sqrt{q}\sin\theta}{2[(q+1)^2-4q\cos^2\theta]}$, then integrating by parts, the first term gives
\begin{equation}\label{e:someeq}
-\int_0^\pi \{\ee^{\ii t\phi_n(\theta)}\phi'_n(\theta)\}h_n(\theta)\,\dd\theta
= -\frac{\ee^{\ii t\phi_n(\theta)}}{\ii t}h_n(\theta)\Big|_0^\pi + \frac{1}{\ii t}\int_0^\pi\ee^{\ii t\phi_n(\theta)}h'_n(\theta)\,\dd\theta\,.
\end{equation}
Now $h_n(0)=h_n(\pi)=0$. Moreover,
\begin{multline}\label{e:theactualone}
h_n'(\theta) = (q+1)\sqrt{q}\bigg\{\frac{s''(\phi_n(\theta))\frac{-2\sqrt{q}\sin\theta}{w'(w_n^{-1}(2\sqrt{q}\cos\theta))}\sin\theta +s'(\phi_n(\theta))\cos\theta}{[(q+1)^2-4q\cos^2\theta]} \\
- \frac{[s'(\phi_n(\theta))\sin\theta][8q\cos\theta\sin\theta]}{[(q+1)^2-4q\cos^2\theta]^2}\bigg\}\,.
\end{multline}

This quantity is bounded over $[0,\pi]$, so by the trivial modulus bound, \eqref{e:someeq} is $O(t^{-1})$, which amounts to a contribution of $O(t^{-2})$ in \eqref{e:anothereq}, indeed faster than $t^{-3/2}$. There is one point however that needs to be checked in the present context~: does the corresponding series of such ``error integrals'' converge? we need to study the decay in $n$. Such decay will be obtained from the $s',s''$ terms,\footnote{Note that here differentiation is w.r.t. energy. This is not the notation used in \cite{Carl}.} see \eqref{e:sdeca} and \eqref{e:probliquot}.

For transparency consider first the case $W=0$. Then $s(\lambda) = \frac{\sin\sqrt{\lambda}L}{\sqrt{\lambda}}$, so $s'(\lambda) = \frac{L\cos\sqrt{\lambda}L}{2\lambda}-\frac{\sin\sqrt{\lambda}L}{2\lambda^{3/2}}$ and $s''(\lambda) = \frac{-L^2\sin\sqrt{\lambda}L}{4\lambda^{3/2}}-\frac{3L\cos\sqrt{\lambda}L}{4\lambda^2}+\frac{3\sin\sqrt{\lambda}L}{4\lambda^{5/2}}$. We thus have $s'(\lambda)=O(\lambda^{-1})$ and $s''(\lambda)=O(\lambda^{-3/2})$. On the other hand, $\phi_n(\theta)\in I_n$ by definition, so we deduce from \eqref{e:dirval} that $s'(\phi_n(\theta))=O(n^{-2})$ and $s''(\phi_n(\theta))=O(n^{-3})$. This is indeed summable.

In general, as shown in \cite[Theorem 1, p.7]{PT87}, we have a series expansion in the form
\begin{equation}\label{e:sersin}
s(\lambda) = \frac{\sin\sqrt{\lambda}L}{\sqrt{\lambda}} + \sum_{k\ge 1}S_k(\lambda)\,,
\end{equation}
where, denoting $\Delta_k:=\{0\le t_1\le \dots\le t_{k+1}=L\}\subset \R^k$, we have
\[
S_k(\lambda) = \int_{\Delta_k} \frac{\sin\sqrt{\lambda} t_1\sin\sqrt{\lambda}(t_2-t_1)\cdots \sin\sqrt{\lambda}(t_{k+1}-t_k)}{\lambda^{(k+1)/2}}W(t_1)\cdots W(t_k)\,\dd t_1\cdots \dd t_k\,.
\]

Note that $\Delta_2$ is just a triangle of area $\frac{1}{2}L^2$. In general as explained in \cite{PT87}, $\Delta_k$ has volume $\frac{L^k}{k!}$. This is done by writing the cube $[0,L]^k$ as the union of $\Delta_k$ and its permutations of coordinates (e.g. $[0,L]^2 = \{0\le t_1\le t_2\le L\}\cup \{0\le t_2\le t_1\le L\}$). As each of these pieces has the same volume we get $L^k = k!\, |\Delta_k|$ as required.

It follows that $|S_k(\lambda)|\le \frac{\|W\|_{\infty}^k}{\lambda^{(k+1)/2}}\cdot \frac{L^k}{k!}$. For clarity, fix $t_j\in [0,L]$ and let
\[
\sigma_j(\lambda) = \frac{\sin\sqrt{\lambda}(t_{j+1}-t_j)}{\sqrt{\lambda}}\,,\qquad f_k(\lambda) = \prod_{j=0}^k\sigma_j(\lambda)\,,
\]
so that if $t_0:=0$, then $\ds S_k(\lambda)=\int_{\Delta_k}f_k(\lambda)  \prod_{j=1}^kW(t_j)\dd t_j$. Then
\[
f_k'(\lambda) = \sum_{j_1=0}^k \sigma_{j_1}'(\lambda)\prod_{\substack{j_2\le k,\\j_2\neq j_1}} \sigma_{j_2}(\lambda)\,,
\]
\begin{equation}\label{e:f''}
f_k''(\lambda) = \sum_{j_1=0}^k\sigma_{j_1}''(\lambda)\prod_{\substack{j_2\le k,\\j_2\neq j_1}} \sigma_{j_2}(\lambda) + \sum_{j_1=0}^k\sum_{\substack{j_2\le k,\\j_2\neq j_1}}\sigma_{j_1}'(\lambda)\sigma_{j_2}'(\lambda)\prod_{\substack{j_3\le k,\\j_3\neq j_1,j_2}} \sigma_{j_3}(\lambda)\,.
\end{equation}
Since $\sigma_j'(\lambda)=\frac{(t_{j+1}-t_j)\cos\sqrt{\lambda}(t_{j+1}-t_j)}{2\lambda}-\frac{\sin\sqrt{\lambda}(t_{j+1}-t_j)}{2\lambda^{3/2}}$, we get $|f_k'(\lambda)|\le \frac{L(k+1)}{\lambda^{(k+2)/2}}$ for any $t_j\in [0,L]$. Similarly, using $\sigma_j''(\lambda)=O(\lambda^{-3/2})$, we get $|f_k''(\lambda)|\le \frac{C_L(k+1)^2}{\lambda^{(k+3)/2}}$. Using dominated convergence, it follows that $|S_k'(\lambda)|\le \frac{L(k+1)}{\lambda^{(k+2)/2}}\cdot \frac{\|W\|_{\infty}^kL^k}{k!}$ and $|S_k''(\lambda)|\le \frac{C_L(k+1)^2}{\lambda^{(k+3)/2}}\cdot \frac{\|W\|_{\infty}^kL^k}{k!}$.

In general, for $\lambda\ge 1$ and $m\ge 0$,
\begin{equation}\label{e:dersumest}
\sum_{k\ge 1}\frac{(k+1)^m}{k!}\frac{\|W\|_\infty^k L^k}{\lambda^{\frac{k+m+1}{2}}}\le \frac{2^m\|W\|_{\infty}L}{\lambda^{\frac{m+2}{2}}}\sum_{k\ge 1}\frac{k^{m}(\|W\|_\infty L)^{k-1}}{k!} = O(\lambda^{-\frac{m+2}{2}})\,.
\end{equation}

Applying this for $m=0,1,2$, using dominated convergence, we conclude from \eqref{e:sersin} that

\begin{equation}\label{e:sdeca}
s(\lambda) = \frac{\sin\sqrt{\lambda}L}{\sqrt{\lambda}}+O(\lambda^{-1})\,,\qquad s'(\lambda) = \frac{L\cos\sqrt{\lambda}L}{2\lambda}+ O(\lambda^{-3/2})\,,
\end{equation}

\begin{equation}\label{e:s''}
s''(\lambda) = \frac{-L^2\sin\sqrt{\lambda}L}{4\lambda^{3/2}} + O(\lambda^{-2})\,.
\end{equation}

Turning back to \eqref{e:theactualone}, it remains to estimate $w'(\lambda)$. We already know from \cite[Theorem 4.3]{Carl} that $w'(\lambda)\neq 0$, but if it decays too fast with $\lambda$ it will be a problem.

Recalling \eqref{e:muw}, we have $w'(\lambda) = (q+1)c'(\lambda)+\alpha s'(\lambda)$. We already know that $s'(\lambda) = O(\lambda^{-1})$. On the other hand, there is also a series expansion for $c(\lambda)$ of the form
\begin{equation}\label{e:c}
c(\lambda) = \cos\sqrt{\lambda}L + \sum_{k\ge 1}\int_{\Delta_k}g_k(\lambda)\prod_{j=1}^kW(t_j)\,\dd t_j\,,
\end{equation}
with
\[
g_k(\lambda) = \frac{\cos\sqrt{\lambda}t_1\sin\sqrt{\lambda}(t_2-t_1)\cdots \sin\sqrt{\lambda}(t_{k+1}-t_k)}{\lambda^{k/2}}\,.
\]
Arguing as before, we find that $|g_k'(\lambda)|\le \frac{c_L(k+1)}{\lambda^{(k+1)/2}}$ and deduce that
\begin{equation}\label{e:c'}
c'(\lambda) = \frac{-L\sin\sqrt{\lambda}L}{2\sqrt{\lambda}}+ O(\lambda^{-1})\,.
\end{equation}

Recalling \eqref{e:sdeca}, we thus showed that
\begin{equation}\label{e:w'}
w'(\lambda) = \frac{-L(q+1)\sin\sqrt{\lambda}L}{2\sqrt{\lambda}}+O(\lambda^{-1})\,.
\end{equation}

Our last worry is the possibility that $\sin\sqrt{\lambda}L$ vanishes. Fortunately this never happens on $\sigma_{ac}$, at least if $\lambda$ is large. In fact, we know that $|w(\lambda)|\le 2\sqrt{q}$. As $w(\lambda)=(q+1)c(\lambda)+\alpha s(\lambda)$, and since $s(\lambda) = O(\lambda^{-1/2})$ by \eqref{e:sdeca}, we get $|(q+1)c(\lambda)|\le 2\sqrt{q}+ O(\lambda^{-1/2})$. By \cite[p.13]{PT87}, we deduce that $|(q+1)\cos\sqrt{\lambda}L|\le 2\sqrt{q}+O(\lambda^{-1/2}) + \frac{\ee^{\|W\|_{\infty}L}}{\sqrt{\lambda}}$. Consequently $\sin^2(\sqrt{\lambda}L) = 1-\cos^2(\sqrt{\lambda}L) \ge 1-\frac{4q}{(q+1)^2}-O(\lambda^{-1/2}) = \frac{(q-1)^2}{(q+1)^2}-O(\lambda^{-1/2})$. Assuming $\lambda$ is large, this is $\ge \frac{(q-1)^2}{4(q+1)^2}$. Thus,
\begin{equation}\label{e:w'min}
|w'(\lambda)|\ge \frac{L(q-1)}{4\sqrt{\lambda}}+O(\lambda^{-1})\,.
\end{equation}

Recalling \eqref{e:s''}, we finally obtain for large $\lambda$,
\begin{equation}\label{e:probliquot}
\frac{s''(\lambda)}{w'(\lambda)} = O(\lambda^{-1})\,.
\end{equation}

We may now turn back to \eqref{e:someeq}. Combining \eqref{e:sdeca}, \eqref{e:probliquot} and \eqref{e:dirval}, recalling that $w_n^{-1}(s)\in I_n$, and each $I_m$, $I_{m+1}$ are separated by $\delta_m$, we get
\begin{equation}\label{e:1stinteg}
-\int_0^\pi \{\ee^{\ii t\phi_n(\theta)}\phi'_n(\theta)\}h_n(\theta)\,\dd\theta = t^{-1}O(n^{-2})\,.
\end{equation}
The first integral on the RHS of \eqref{e:anothereq} is thus $t^{-2}O(n^{-2})$, so its series gives $t^{-2}O(1)$. 

\medskip

\textbf{Recap.} So far we have controlled the first integral on the RHS of \eqref{e:anothereq}. The rest of this subsection is devoted to the analysis of the second integral. Using the technique of stationary phase, we will see it is responsible for the main term in the asymptotics. Most of the effort will be to ensure that the series of error terms converges.

Recall that $\phi_n(\theta)=w_n^{-1}(2\sqrt{q}\cos\theta)$ and $\phi'_n(\theta) = \frac{-2\sqrt{q}\sin\theta}{w'(w_n^{-1}(2\sqrt{q}\cos\theta))}$. So
\begin{align}\label{e:phi''}
\phi_n''(\theta)&=\frac{-2\sqrt{q}\cos\theta}{w'(w_n^{-1}(2\sqrt{q}\cos\theta))}+\frac{2\sqrt{q}\sin\theta \cdot w''(w_n^{-1}(2\sqrt{q}\cos\theta))}{[w'(w_n^{-1}(2\sqrt{q}\cos\theta))]^2}\cdot \frac{-2\sqrt{q}\sin\theta}{w'(w_n^{-1}(2\sqrt{q}\cos\theta))}\nonumber\\
&=-\bigg[\frac{2\sqrt{q}\cos\theta}{w'(\phi_n(\theta))}+\frac{4q\sin^2\theta \cdot w''(\phi_n(\theta))}{[w'(\phi_n(\theta))]^3}\bigg]\,.
\end{align}
In $[0,\pi]$, $\phi_n$ has only critical points at $0,\pi$. If $I_n=[a_n,b_n]$, we have $\phi_n(0)\in \{a_n,b_n\}$, $\phi_n(\pi)\in \{a_n,b_n\}$, $\phi_n''(0) = \frac{-2\sqrt{q}}{w'(\phi_n(0))}$, $\phi_n''(\pi) = \frac{2\sqrt{q}}{w'(\phi_n(\pi))}$. If $\phi_n(0)=a_n$ then $w_n^{-1}(2\sqrt{q})=a_n$, so $w(a_n) = 2\sqrt{q}$ and thus $w(b_n)=-2\sqrt{q}$. Hence, $w$ decreases on $I_n$, i.e. $w'<0$, so $\phi_n''(0)=\frac{-2\sqrt{q}}{w'(a_n)}$ has positive sign while $\phi_n''(\pi)=\frac{2\sqrt{q}}{w'(b_n)}$ has negative sign. Similarly, if $\phi_n(0)=b_n$, then $w$ increases on $I_n$, so $\phi_n''(0)=\frac{-2\sqrt{q}}{w'(b_n)}$ has negative sign while $\phi_n''(\pi)=\frac{2\sqrt{q}}{w'(a_n)}$ has positive sign.

Applying Corollary~\ref{cor:phase}, \eqref{e:end}, we deduce that when $\phi_n(0)=a_n$, we have
\begin{multline}\label{e:phasresult}
\int_{0}^{\pi}\ee^{\ii t\phi_n(\theta)}s(\phi_n(\theta))g(\theta)\,\dd\theta \\
= \Big(\frac{\pi}{\sqrt{q}\,t}\Big)^{1/2}\cdot \frac{\ee^{\frac{\ii \pi}{4}}\ee^{\ii a_n t}|w'(a_n)|^{1/2}s(a_n)g(0) + \ee^{\frac{-\ii \pi}{4}}\ee^{\ii b_n t}|w'(b_n)|^{1/2}s(b_n)g(\pi)}{2} + O(t^{-1}).
\end{multline}
Recall that $g(\pi)=-g(0)$. Using this, we see that the expression when $\phi_n(0)=b_n$ is exactly the same, except for a sign change.

We showed prior to \eqref{e:psi1} that $w(a_{2n-1})=2\sqrt{q}$ and $w(b_{2n})=2\sqrt{q}$. It follows that $\phi_{2n-1}(0)=a_{2n-1}$ and $\phi_{2n}(0)=b_{2n}$. So in general \eqref{e:phasresult} should be multiplied by $(-1)^{n+1}$. In view of \eqref{e:anothereq} and \eqref{e:1stinteg}, we would like to conclude that
\begin{multline}\label{e:mainclaimdiag}
\ee^{\ii tH}\mathbf{1}_{ac}(H)(o,o)\\
=\frac{\ii g(0)}{\sqrt{\pi\sqrt{q}}\,t^{3/2}}\sum_{n\ge 1}(-1)^n \frac{\ee^{\frac{\ii \pi}{4}}\ee^{\ii a_n t}|w'(a_n)|^{1/2}s(a_n) - \ee^{\frac{-\ii \pi}{4}}\ee^{\ii b_n t}|w'(b_n)|^{1/2}s(b_n)}{2} +O(t^{-2}).
\end{multline}

In doing so however, we assert that the sum over $n$ of the terms $O(t^{-1})$ from \eqref{e:phasresult} converge. This is what we prove now, and we shall need the precise error estimate provided by Corollary~\ref{cor:phase} and \eqref{e:corprec}. We focus on the error terms at $0$, the same holds at $\pi$.

\medskip

We have $\frac{\dd}{\dd\theta}s(\phi_n(\theta))g(\theta)|_{\theta=0} =[s'(\phi_n(\theta))\phi_n'(\theta)g(\theta)+s(\phi_n(\theta))g'(\theta)]|_{\theta=0}$. But $\phi_n'(0)=0$ and $g'(0)=0$ as we see from \eqref{e:g}. So $\frac{\dd}{\dd\theta}s(\phi_n(\theta))g(\theta)|_{\theta=0}=0$. Next, $\phi_n'''(0)=0$, as we see from \eqref{e:phi''}. By \eqref{e:q11a}, this shows that $Q_{1,1}(0)=0$ (as in the combinatorial case). 

Next, assuming $\phi_n(0)=a_n$, we have
\[
Q_{1,1}(\theta) = \frac{s(\phi_n(\theta))g(\theta)w'(\phi_n(\theta))}{-2\sqrt{q}\sin\theta}-\frac{|w'(a_n)|^{1/2}s(a_n)g(0)}{2q^{1/4}\sqrt{\phi_n(\theta)-a_n}}\,.
\]
In particular $|Q_{1,1}(\frac{\pi}{2})| = \frac{|s(a_n)g(0)||w'(a_n)|^{1/2}}{2q^{1/4}\sqrt{\phi_n(\frac{\pi}{2})-a_n}}$. Recalling that $a_n \asymp n^2$, we know from \eqref{e:sdeca} and \eqref{e:w'} that $s(a_n)|w'(a_n)|^{1/2} \asymp n^{-3/2}$. Next, as $\phi_n$ is monotone,
\[
\sqrt{\phi_n\Big(\frac{\pi}{2}\Big)-a_n}\ge \sqrt{\phi_n\Big(\frac{\pi}{2}\Big)-\phi_n\Big(\frac{\pi}{4}\Big)} \ge \frac{\sqrt{\pi}}{2}  \inf_{\theta\in [\frac{\pi}{4},\frac{\pi}{2}]}\sqrt{\phi_n'(\theta)} \,.
\]
Now $\phi_n'(\theta)=\frac{-2\sqrt{q}\sin\theta}{w'(\phi_n(\theta))}$. Over $[\frac{\pi}{4},\frac{\pi}{2}]$, $\sin\theta\ge \frac{1}{\sqrt{2}}$, while by \eqref{e:w'}, we know $|w'(\lambda)|\lesssim \frac{L(q+1)}{\sqrt{\lambda}}$. We conclude that $\frac{1}{\sqrt{\phi_n(\frac{\pi}{2})-a_n}}\lesssim \frac{C}{\sqrt{n}}$. So the term $|Q_{1,1}(\frac{\pi}{2})|$ from Corollary~\ref{cor:phase} is $O(n^{-2})$, which is summable in $n$, and note that we also controlled the last error term in that corollary, which is just $2|Q_{1,1}(\frac{\pi}{2})|$ in our setting. So it only remains to control the total variation $V_{0,\frac{\pi}{2}}(Q_{1,1})$.

\medskip

We start by writing $Q_{1,1}$ in a less singular form. Using Taylor-Lagrange we expand
\[
\phi_n(\theta)-a_n = \frac{\phi_n''(0)}{2}\theta^2+\frac{\phi_n'''(\theta_0)}{6}\theta^3
\]
for some $\theta_0\in (0,\theta)$. Recalling $\phi_n''(0)=\frac{-2\sqrt{q}}{w'(a_n)}=\frac{2\sqrt{q}}{|w'(a_n)|}$, we have
\begin{align*}
Q_{1,1}(\theta)&=\frac{-s(\phi_n(\theta))w'(\phi_n(\theta))g(\theta)}{2\sqrt{q}\sin\theta}-\frac{|w'(a_n)|s(a_n)g(0)}{2\sqrt{q}\,\theta\sqrt{1+\frac{\phi_n'''(\theta_0)}{3\phi_n''(0)}\theta}} \\
&= \frac{F(0)-F(\theta)}{2\sqrt{q}\,\theta\sqrt{1+\frac{\phi_n'''(\theta_0)}{3\phi_n''(0)}\theta}} + \frac{w'(\phi_n(\theta))s(\phi_n(\theta))g(\theta)(\sin\theta-\theta)}{2\sqrt{q}\,\theta\sin\theta}\,,
\end{align*}
where
\[
F(\theta)=w'(\phi_n(\theta))s(\phi_n(\theta))g(\theta)\sqrt{1+\frac{\phi_n'''(\theta_0)}{3\phi_n''(0)}\theta}\,.
\]
Note that $\frac{\sin\theta-\theta}{\theta \sin\theta}\To 0$ by expanding $\sin\theta = \theta + \frac{\theta^3}{6}\cos\theta_1$ for some $\theta_1\in (0,\theta)$. Also, $\frac{F(\theta)-F(0)}{\theta}\To F'(0)$. To find $F'$, we observe that $1+\frac{\phi_n'''(\theta_0)}{3\phi_n''(0)}\theta = \frac{2(\phi_n(\theta)-a_n)}{\phi_n''(0)\theta^2}$. But
\begin{align}\label{e:derroot}
\frac{\dd}{\dd\theta}\sqrt{\frac{2(\phi_n(\theta)-a_n)}{\phi_n''(0)\theta^2}} &= \frac{1}{2\sqrt{\frac{2(\phi_n(\theta)-a_n)}{\phi_n''(0)\theta^2}}}\Big(\frac{2\phi_n'(\theta)}{\phi_n''(0)\theta^2}-\frac{4(\phi_n(\theta)-a_n)}{\phi_n''(0)\theta^3}\Big)\\
&=\frac{1}{\sqrt{1+\frac{\phi_n'''(\theta_0)}{3\phi_n''(0)}\theta}}\Big(\frac{\phi_n'(\theta)}{\phi_n''(0)\theta^2}-\frac{1}{\theta}\Big[1+\frac{\phi_n'''(\theta_0)}{3\phi_n''(0)}\theta\Big]\Big)\nonumber \\
&=\frac{1}{\phi_n''(0)\sqrt{1+\frac{\phi_n'''(\theta_0)}{3\phi_n''(0)}\theta}}\Big(\frac{\phi_n'''(\theta_2)}{2}-\frac{\phi_n'''(\theta_0)}{3}\Big)\,,\nonumber
\end{align}
where in the last step we expanded $\phi_n'(\theta)=\theta\phi_n''(0)+\frac{\theta^2}{2}\phi_n'''(\theta_2)$ for some $\theta_2\in(0,\theta)$. We may deduce that $F'(0)=0$ (using $\phi_n'(0)=g'(0)=\phi_n'''(0)=0$), confirming that $Q_{1,1}(0)=0$ as we saw before. More importantly, we have
\begin{align*}
Q_{1,1}'(\theta) &= \frac{-F'(\theta)\theta\sqrt{1+\frac{\phi_n'''(\theta_0)}{3\phi_n''(0)}\theta}+(F(\theta)-F(0))\big[\sqrt{1+\frac{\phi_n'''(\theta_0)}{3\phi_n''(0)}\theta}+\theta\frac{\dd}{\dd\theta}\sqrt{\frac{2(\phi_n(\theta)-a_n)}{\phi_n''(0)\theta^2}}\big]}{2\sqrt{q}\theta^2(1+\frac{\phi_n'''(\theta_0)}{3\phi_n''(0)}\theta)}\\
&\quad +\frac{\theta\sin\theta S'(\theta)-S(\theta)[\sin\theta+\theta\cos\theta]}{2\sqrt{q}\theta^2\sin^2\theta}\,,
\end{align*}
where $S(\theta)=w'(\phi_n(\theta))s(\phi_n(\theta))g(\theta)(\sin\theta-\theta)$. We may simplify this as
\begin{multline}\label{e:q11'2}
Q_{1,1}'(\theta) = \frac{F(\theta)-F(0)-F'(\theta)\theta}{2\sqrt{q}\theta^2\sqrt{1+\frac{\phi_n'''(\theta_0)}{3\phi_n''(0)}\theta)}} + \frac{(F(\theta)-F(0))\frac{\dd}{\dd\theta}\sqrt{\frac{2(\phi_n(\theta)-a_n)}{\phi_n''(0)\theta^2}}}{2\sqrt{q}\theta(1+\frac{\phi_n'''(\theta_0)}{3\phi_n''(0)}\theta)}\\
\quad +\frac{(w''sg+w's'g)\phi_n'(\theta)+w'sg'}{2\sqrt{q}}\cdot\frac{(\sin\theta-\theta)}{\theta\sin\theta} - \frac{w'sg}{2\sqrt{q}}\cdot\frac{(\sin^2\theta-\theta^2\cos\theta)}{\theta^2\sin^2\theta}\,,
\end{multline}
where we used the shorthand notation $w=w(\phi_n(\theta))$, $s=s(\phi_n(\theta))$, $g=g(\theta)$. Here, $\frac{F(\theta)-F(0)-F'(\theta)\theta}{\theta^2}\To \frac{-F''(0)}{2}$, $\frac{F(\theta)-F(0)}{\theta}\To F'(0)$, $\frac{\sin\theta-\theta}{\theta\sin\theta}\To 0$ as previously established and $\frac{\sin^2\theta-\theta^2\cos\theta}{\theta^2\sin^2\theta}\To \frac{1}{4}+\frac{1}{3}=\frac{7}{12}$ as easily checked.

To find $F''$, we note that by \eqref{e:derroot}, we have
\begin{multline}\label{e:d2sq}
\frac{\dd^2}{\dd\theta^2}\sqrt{\frac{2(\phi_n(\theta)-a_n)}{\phi_n''(0)\theta^2}} = \Big(\frac{\dd}{\dd \theta}\frac{1}{\sqrt{\frac{2(\phi_n(\theta)-a_n)}{\phi_n''(0)\theta^2}}}\Big)\Big(\frac{\phi_n'(\theta)}{\phi_n''(0)\theta^2}-\frac{2(\phi_n(\theta)-a_n)}{\phi_n''(0)\theta^3}\Big)\\
 + \frac{1}{\phi_n''(0)\sqrt{\frac{2(\phi_n(\theta)-a_n)}{\phi_n''(0)\theta^2}}}\left(\frac{\phi_n''(\theta)}{\theta^2}-\frac{4\phi_n'(\theta)}{\theta^3}+\frac{6(\phi_n(\theta)-a_n)}{\theta^4}\right).
\end{multline}
The first term is $\frac{1}{(...)^{3/2}}\left(\frac{\phi_n'''(\theta_2)}{2\phi_n''(0)}-\frac{\phi_n'''(\theta_0)}{3\phi_n''(0)}\right)^2$ by \eqref{e:derroot}. For the second, expand
\begin{align*}
\phi_n(\theta)-a_n &= \frac{\theta^2\phi_n''(0)}{2}+\frac{\theta^3\phi_n'''(0)}{6}+\frac{\theta^4\phi_n^{(4)}(\theta_a)}{24}\,,\\
\phi_n'(\theta)&=\theta\phi_n''(0)+\frac{\theta^2}{2}\phi_n'''(0)+\frac{\theta^3}{6}\phi_n^{(4)}(\theta_b)\,,\\
\phi_n''(\theta)&=\phi_n''(0)+\theta\phi_n'''(0)+\frac{\theta^2}{2}\phi_n^{(4)}(\theta_c)\,.
\end{align*}
Then the last bracket becomes $\big(\frac{\phi_n^{(4)}(\theta_a)}{4}-\frac{2\phi_n^{(4)}(\theta_b)}{3}+\frac{\phi_n^{(4)}(\theta_c)}{2}\big)$. As $\phi_n'''(0)=0$, this shows $\frac{\dd^2}{\dd\theta^2}\sqrt{\frac{2(\phi_n(\theta)-a_n)}{\phi_n''(0)\theta^2}} = \frac{\phi^{(4)}(0)}{12\phi''(0)}$ at zero, so that $F''(0)$ is finite.

Back to \eqref{e:q11'2}, let us estimate $\|Q_{1,1}'\|_{L^\infty[0,\frac{\pi}{2}]}$. We begin by controlling the last terms. Note that $g$ and its derivatives remain bounded. By \eqref{e:sdeca} and \eqref{e:w'}, $w's= O(\lambda^{-1})$. Since $a_n \asymp n^{-2}$, we see the sup norm of the last term is $O(n^{-2})$. This also controls the term $w'sg'$ by $O(n^{-2})$. Next, $\phi_n'(\theta)=O(\sqrt{\lambda})$. By \eqref{e:sdeca} and \eqref{e:w'}, we get $w's'g\phi_n(\theta) = O(\lambda^{-1})=O(n^{-2})$ in sup norm. To control $w''$, we first note that by \eqref{e:c} and \eqref{e:c'}, we have using the same arguments leading to \eqref{e:s''} that
\begin{equation}\label{e:c''}
c''(\lambda) = \frac{-L^2\cos\sqrt{\lambda}L}{4\lambda}+O(\lambda^{-3/2})\,.
\end{equation}
Recalling that $w=(q+1)c+\alpha s$, we see from \eqref{e:s''} that
\begin{equation}\label{e:w''}
w''(\lambda) = O(\lambda^{-1})\,.
\end{equation}
Thus, $w''sg\phi'(\theta) = O(\lambda^{-1})=O(n^{-2})$ is also summable in sup norm.

It remains to control the first two terms in \eqref{e:q11'2}, which we rewrite as
\begin{equation}\label{e:reducedF}
\frac{\frac{F''(\theta_1)}{2}-F''(\theta_2)}{2\sqrt{q}\sqrt{1+\frac{\phi_n'''(\theta_0)}{3\phi_n''(0)}\theta)}} + \frac{F'(\theta_0)\frac{\dd}{\dd\theta}\sqrt{\frac{2(\phi_n(\theta)-a_n)}{\phi_n''(0)\theta^2}}}{2\sqrt{q}(1+\frac{\phi_n'''(\theta_0)}{3\phi_n''(0)}\theta)}
\end{equation}
using Taylor-Lagrange, for some $\theta_j\in (0,\theta)$. We first show the denominator is well-behaved over $[0,\frac{\pi}{2}]$. Since $\phi_n''(0)=\frac{-2\sqrt{q}}{w'(a_n)}$, using \eqref{e:phi''}, one sees that
\begin{multline}\label{e:somequot}
\frac{\phi'''_n(\theta)}{\phi''_n(0)}= \frac{-w'(a_n)\sin\theta}{w'(\phi_n(\theta))}+\frac{3\sqrt{q}\sin2\theta w'(a_n)w''(\phi_n(\theta))}{w'(\phi_n(\theta))^3} -\frac{4q\sin^3\theta w'''(\phi_n(\theta))w'(a_n)}{w'(\phi_n(\theta))^4}\\
+\frac{12q\sin^3\theta w''(\phi_n(\theta))^2w'(a_n)}{w'(\phi_n(\theta))^5}
\end{multline}
say if $\phi_n(0)=a_n$. We claim the above stays bounded over $[0,\frac{\pi}{2}]$. In fact, $\phi_n(\theta)\in I_n=[a_n,b_n]$ and  $a_n,b_n$ are both $\asymp n^2$, so we see by \eqref{e:w'} and \eqref{e:w'min} that the first term in the RHS stays bounded. In view of \eqref{e:w''}, the second term is $O(n^{-1})O(n^{-2})O(n^3)=O(1)$. Similarly, the last term stays bounded. Finally we should control $w'''$. Differentiating \eqref{e:f''} and noting that $\sigma_j'''(\lambda)=O(\lambda^{-2})$, we see that $|f_k'''(\lambda)|\le \frac{c_L(k+1)^3}{\lambda^{(k+4)/2}}$ for large $\lambda$. Using \eqref{e:dersumest} with $m=3$, it follows that
\begin{equation}\label{e:s'''}
s'''(\lambda) = \frac{-L^3\cos\sqrt{\lambda}L}{8\lambda^2}+O(\lambda^{-5/2})\,.
\end{equation}
The same proof, using \eqref{e:c}, \eqref{e:c''} shows that
\begin{equation}\label{e:c'''}
c'''(\lambda) = \frac{L^3\sin\sqrt{\lambda}L}{8\lambda^{3/2}}+O(\lambda^{-2})\,.
\end{equation}
Hence,
\[
w'''(\lambda) = O(\lambda^{-3/2})\,.
\]
Back to \eqref{e:somequot}, we see the third term on the RHS is $O(n^{-3})O(n^{-1})O(n^4)=O(1)$. This completes the proof that $\frac{\phi'''_n(\theta)}{\phi_n''(0)}$ stays bounded over $[0,\frac{\pi}{2}]$, say $\left|\frac{\phi'''_n(\theta)}{\phi_n''(0)}\right|\le M$. 

For $\theta\in [0,\frac{1}{M}]$, we thus have $1+\frac{\phi'''_n(\theta_0)}{3\phi_n''(0)}\theta \ge \frac{2}{3}$. For $\theta\in [\frac{1}{M},\frac{\pi}{2}]$, as $\phi$ is increasing, we have
\[
1+\frac{\phi'''_n(\theta_0)}{3\phi_n''(0)} = \frac{2(\phi_n(\theta)-a_n)}{\phi_n''(0)\theta^2} \ge \frac{2(\phi_n(\frac{1}{M})-\phi_n(\frac{1}{2M}))}{\phi_n''(0)\theta^2}\ge \frac{4}{\pi^2M}\cdot \inf\limits_{\theta\in [\frac{1}{2M},\frac{1}{M}]} \frac{\phi_n'(\theta)}{\phi_n''(0)}\,.
\]
Recalling $\frac{\phi_n'(\theta)}{\phi_n''(0)} = \frac{w'(a_n)\sin\theta}{w'(\phi_n(\theta))}$, we get using \eqref{e:w'} and \eqref{e:w'min} that $1+\frac{\phi_n'''(\theta_0)}{3\phi_n''(0)}\theta\ge C_M>0$ over $[\frac{1}{M},\frac{\pi}{2}]$, with $C_M = \frac{(q-1)}{8\pi^2 M^2(q+1)}$ if $n\ge n_0=n_0(L,\|W\|_\infty)$.

Back to \eqref{e:reducedF}, using \eqref{e:derroot}, the second term takes the form $\frac{F'(\theta_0)(\frac{\phi_n'''(\theta_2)}{2\phi_n''(0)}-\frac{\phi_n'''(\theta_0)}{3\phi_n''(0)})}{(1+\frac{\phi_n'''(\theta_0)}{3\phi_n''(0)}\theta)^{3/2}}$. In view of the above arguments, this may be bounded by $C \|F'\|_\infty$ for some $C$ independent of $n$. But $F=w'sg\sqrt{1+\frac{\phi_n'''(\theta_0)}{3\phi_n''(0)}\theta}$, where $w'sg=w'(\phi_n(\theta))s(\phi_n(\theta))g(\theta)$ for short, so
\begin{equation}\label{e:F'}
F' = \bigg[(w''sg + w's'g)\phi_n'(\theta)+w'sg' + w'sg\frac{\frac{\phi_n'''(\theta_2)}{2\phi_n''(0)}-\frac{\phi_n'''(\theta_0)}{3\phi_n''(0)}}{1+\frac{\phi_n'''(\theta_0)}{3\phi_n''(0)}\theta}\bigg]\sqrt{1+\frac{\phi_n'''(\theta_0)}{3\phi_n''(0)}}\,.
\end{equation}
From the previous arguments, each of these terms is $O(n^{-2})$, so $\|F'\|_{L^\infty[0,\frac{\pi}{2}]}= O(n^{-2})$.

It only remains to bound the first term in \eqref{e:reducedF}. This amounts to control $\|F''\|_{L^\infty[0,\frac{\pi}{2}]}$. We have by \eqref{e:derroot},
\begin{multline}\label{e:F''}
F'' = \left[(w'''sg + 2w''s'g + w's''g)\phi_n'(\theta)^2+2[w''sg'+w's'g']\phi_n'(\theta)+w'sg''\right] \\
\times \sqrt{1+\frac{\phi_n'''(\theta_0)}{3\phi_n''(0)}} + 2\left\{(w''sg + w's'g)\phi_n'(\theta)+w'sg'\right\}\frac{\frac{\phi_n'''(\theta_2)}{2\phi_n''(0)}-\frac{\phi_n'''(\theta_0)}{3\phi_n''(0)}}{\sqrt{1+\frac{\phi_n'''(\theta_0)}{3\phi_n''(0)}\theta}}  \\
+ w'sg\frac{\dd^2}{\dd\theta^2}\sqrt{1+\frac{\phi_n'''(\theta_0)}{3\phi_n''(0)}} \,.
\end{multline}
The previous estimates over $w',s$ and their derivatives allow to conclude that each term above is $O(n^{-2})$, except perhaps $w'sg\frac{\dd^2}{\dd\theta^2}\sqrt{1+\frac{\phi_n'''(\theta_0)}{3\phi_n''(0)}}$. Since we know that $w'sg = O(n^{-2})$, it suffices to show that $\frac{\dd^2}{\dd\theta^2}\sqrt{1+\frac{\phi_n'''(\theta_0)}{3\phi_n''(0)}}$ stays bounded. We showed after \eqref{e:d2sq} that
\[
\frac{\dd^2}{\dd\theta^2}\sqrt{1+\frac{\phi_n'''(\theta_0)}{3\phi_n''(0)}} = \frac{(\frac{\phi_n'''(\theta_2)}{2\phi_n''(0)}-\frac{\phi_n'''(\theta_0)}{3\phi_n''(0)})^2}{(1+\frac{\phi_n'''(\theta_0)}{3\phi_n''(0)}\theta)^{3/2}} + \frac{\frac{\phi_n^{(4)}(\theta_a)}{4\phi_n''(0)}-\frac{2\phi_n^{(4)}(\theta_b)}{3\phi_n''(0)}+\frac{\phi_n^{(4)}(\theta_c)}{2\phi_n''(0)}}{\sqrt{1+\frac{\phi_n'''(\theta_0)}{3\phi_n''(0)}\theta}}\,.
\]

We have already controlled all these terms uniformly, except $\frac{\phi_n^{(4)}(\theta)}{\phi_n''(0)}$.

Recalling \eqref{e:somequot}, one easily checks that
\[
\Big|\frac{\phi_n^{(4)}(\theta)}{\phi_n''(0)}\Big|\le C_q|w'(a_n)|\max_{\substack{j\le 4\\ k\le 3}} \bigg(\Big|\frac{w^{(j)}(\phi_n(\theta))}{w'(\phi_n(\theta))^{j+1}}\Big|,\Big|\frac{w''(\phi_n(\theta))^k}{w'(\phi_n(\theta))^{2k+1}}\Big|,\Big|\frac{w''(\phi_n(\theta)w'''(\phi_n(\theta))}{w'(\phi_n(\theta))^6}\Big|\bigg).
\]

Our estimates imply that each term is $O(n^{-1})O(n)=O(1)$, except perhaps for the one containing $w^{(4)}$, which we now consider. It suffices to show that $w^{(4)}(\phi_n(\theta)) = O(n^{-4})$.

Differentiating \eqref{e:f''} twice using $\sigma^{(4)}_j(\lambda)=O(\lambda^{-5/2})$, we deduce that $|f_k^{(4)}(\lambda)|\le \frac{C(k+1)^4}{\lambda^{(k+5)/2}}$. Hence, using \eqref{e:dersumest} we get $s^{(4)}(\lambda)= O(\lambda^{-5/2})$. The same arguments show that $|g_k^{(4)}(\lambda)|\le \frac{\tilde{c}_L(k+1)^4}{\lambda^{(k+4)/2}}$, so we deduce from \eqref{e:c}-\eqref{e:c'''} that $c^{(4)}(\lambda) = O(\lambda^{-2})$. Since $w=(q+1)c+\alpha s$, we have showed that
\[
w^{(4)}(\lambda) = O(\lambda^{-2})\,.
\]

This proves that $\frac{\phi_n^{(4)}(\theta)}{\phi_n''(0)}$ stays uniformly bounded, so $F'' = O(n^{-2})$ as desired. 

Recalling \eqref{e:q11'2}, we have shown that $V_{0,\frac{\pi}{2}}(Q_{1,1}) = \int_0^{\pi/2}|Q_{1,1}'(\theta)|\,\dd\theta \le \frac{\pi}{2}\|Q_{1,1}'\|_{\infty}$ is summable. This finally completes the proof of \eqref{e:mainclaimdiag}, i.e. the sums of error terms provided by Corollary~\ref{cor:phase} are indeed $O(t^{-1})$.

\subsection{Handling one edge}\label{sec:1ed}
As discussed in \cite[Appendix]{ISW}, for $x,y$ in the same edge, it follows from \eqref{e:diffsame} that
\[
\Im G^\lambda_{\mathbf{T}_q}(x,y)= \Psi_1(\lambda)\Psi_2(\lambda)\,,
\]
where $\Psi_1(\lambda) = \Im G_{\mathbf{T}_q}^\lambda(o,o)=(-1)^{n+1}s(\lambda)\Psi(w(\lambda))$ on $I_n$ as before and
\begin{multline}\label{e:psi2}
\Psi_2(\lambda)\\
=\frac{S_{\lambda}(L-x)S_{\lambda}(L-y)+S_{\lambda}(x)S_{\lambda}(y)+\frac{w(\lambda)}{q+1}[S_{\lambda}(L-x)S_\lambda(y)+S_{\lambda}(x)S_{\lambda}(L-y)]}{s(\lambda)^2}.
\end{multline}
In view of \eqref{e:contker}, noting that $\Psi_1(\pm 2\sqrt{q})=0$, we have
\begin{multline}\label{e:expan2}
\ee^{\ii tH}\mathbf{1}_{ac}(H)(x,y) = \frac{-1}{\ii \pi t}\int_{\sigma_{ac}(H)}\ee^{\ii t\lambda}(\Psi_1\Psi_2)'(\lambda)\,\dd\lambda  \\
=\frac{-\ii}{\pi t}\sum_{n=1}^{\infty}(-1)^n\int_{I_n}\ee^{\ii t \lambda}\Big[\frac{\left\{s(\lambda)\Psi_2(\lambda)\right\}'\Psi(w(\lambda))}{w'(\lambda)} + s(\lambda)\Psi_2(\lambda)\Psi'(w(\lambda))\Big]w'(\lambda)\,\dd\lambda\,.
\end{multline}

We now repeat the arguments of \S~\ref{sec:diag}~:

\medskip

\textbf{Step 1~:} The analog of \eqref{e:someeq} is still $O(t^{-1})$. To prove this, we need to check that $\frac{\dd}{\dd\theta}\frac{\sqrt{q}(q+1)\sin\theta[s'(\phi_n(\theta))\Psi_2(\phi_n(\theta)+s(\phi_n(\theta))\Psi_2'(\phi_n(\theta))]}{(q+1)^2-4q\cos^2\theta}=O(n^{-2})$. In turn, it suffices to prove that $s'(\phi_n(\theta))\Psi_2(\phi_n(\theta))$, $s(\Phi_n(\theta))\Psi_2'(\phi_n(\theta))$ and their $\theta$-derivatives are $O(n^{-2})$. For this, we note that for $j\le 2$,
\begin{equation}\label{e:psi2boun}
\Psi_2^{(j)}(\lambda) = O(\lambda^{-j/2})\,.
\end{equation}
In fact, expansion \eqref{e:sersin} is valid more generally for $S_{\lambda}(x)$, replacing $L$ by $x$. Using \eqref{e:sdeca} and its analog, we see that each term defining $\Psi_2(\lambda)$ is $\frac{O(\lambda^{-1})}{\frac{\sin^2\sqrt{\lambda}L}{\lambda}+O(\lambda^{-3/2})}$, and we showed after \eqref{e:w'} that $\sin\sqrt{\lambda}L$ is uniformly bounded from below over $\sigma_{ac}(H)$. Recalling that $|w(\lambda)|\le 2\sqrt{q}$ on $\sigma_{ac}(H)$, we conclude that $\Psi_2(\lambda)=O(1)$.

\medskip

Next, $\Psi_2'(\lambda)$ consists of terms of the form $\frac{S_{\lambda}'(x)S_{\lambda}(y)}{s^2(\lambda)}$, $\frac{S_{\lambda}(x)S_{\lambda}(y)s'(\lambda)}{s^3(\lambda)}$ and $\frac{w'(\lambda)S_{\lambda}(x)S_{\lambda}(y)}{s^2(\lambda)}$. Each of these terms is $O(\lambda^{-1/2})$ in view of \eqref{e:w'} and the analog of \eqref{e:sdeca} with $L$ replaced by $x$. This proves \eqref{e:psi2boun} for $j=1$. Similarly, using \eqref{e:w''} and the analog of \eqref{e:s''} for any $x$, we see that \eqref{e:psi2boun} holds for $j=2$.

\medskip

We may now conclude the main claim of this step~: we get that $s'(\phi_n(\theta))\Psi_2(\phi_n(\theta))$ and $s(\phi_n(\theta))\Psi_2'(\phi_n(\theta))$ are both $O(n^{-2})$ (recall that $\phi_n(\theta)\asymp n^2$). The derivatives have the form $s''(\phi_n(\theta))\Psi_2(\phi_n(\theta))\phi_n'(\theta)$, $s'(\phi_n(\theta))\Psi_2'(\theta)\phi_n'(\theta)$ and $s(\phi_n(\theta))\Psi_2''(\phi_n(\theta))\phi_n'(\theta)$. Each term is $O(n^{-2})$ as required.

\medskip

\textbf{Step 2~:} We now turn to the main contribution in \eqref{e:expan2}, $\int_{I_n}s(\lambda)\Psi_2(\lambda)\Psi'(w(\lambda))w'(\lambda)$. Clearly in the main term of \eqref{e:phasresult}, one just replaces $s(a_n)$ by $s(a_n)\Psi_2(a_n)$ and $s(b_n)$ by $s(b_n)\Psi_2(b_n)$. It remains to control the error term. Following the arguments, we arrive at \eqref{e:q11'2}, where now $F(\theta) = w'(\phi_n(\theta))s(\phi_n(\theta))\Psi_2(\phi_n(\theta))g(\theta)\sqrt{1+\frac{\phi_n'''(\theta_0)}{3\phi_n''(0)}\theta}$ and $S(\theta)=w'(\phi_n(\theta))s(\phi_n(\theta))\Psi_2(\phi_n(\theta))g(\theta)(\sin\theta-\theta)$. The first two terms in \eqref{e:q11'2} are controlled by showing that $F'$, $F''$ are $O(n^{-2})$. This is indeed the case using \eqref{e:psi2boun} (and the bounds provided for \eqref{e:F'}-\eqref{e:F''}). The last two terms become $\frac{(w''sg\Psi_2+w's'g\Psi_2+w's\Psi_2'g)\phi_n'(\theta)+w's\Psi_2g'}{2\sqrt{q}}\cdot\frac{\sin\theta-\theta}{\theta\sin\theta} - \frac{w'sg\Psi_2}{2\sqrt{q}}\cdot\frac{\sin^2\theta-\theta^2\cos\theta}{\theta^2\sin^2\theta}$. This is again $O(n^{-2})$ in view of \eqref{e:psi2boun} and our bounds on $w^{(k)}$ and $s^{(k)}$.

\medskip

\textbf{Conclusion~:} We conclude that if $x,y\in\mathbf{T}_q$ belong to the same edge $e$, then
\begin{multline*}
\ee^{\ii tH}\mathbf{1}_{ac}(H)(x,y)= \frac{\ii g(0)}{\sqrt{\pi \sqrt{q}}\, t^{3/2}}\sum_{n\ge 1}(-1)^n\bigg[ \frac{\ee^{\frac{\ii \pi}{4}}\ee^{\ii a_n t}|w'(a_n)|^{1/2}s(a_n)\Psi_2(a_n) }{2}\\
- \frac{\ee^{\frac{-\ii \pi}{4}}\ee^{\ii b_n t}|w'(b_n)|^{1/2}s(b_n)\Psi_2(b_n)}{2}\bigg]+O(t^{-2})
\end{multline*}
independently of the edge $e$, where $\Psi_2(\lambda)=\Psi_2(\lambda,x,y)$ is given by \eqref{e:psi2}. The error $O(t^{-2})$ is independent of $x,y\in e$, since the bound \eqref{e:psi2boun} is uniform over $x,y$ belonging to the compact interval $[0,L]$. Note that $\Psi_2(\lambda,0,0)=1$, in which case the above expression coincides with that of \S~\ref{sec:diag}, as it should.

\subsection{Handling distinct edges}
The last case is when $x\in e_1$ and $y\in e_2$ for distinct edges. Then there is a unique path $(v_0,\dots,v_k)$ from $e_1$ to $e_2$, so that $e_1=(v_0,v_1)$ and $e_2=(v_{k-1},v_k)$. It is shown in \cite[eq. (5.4),(5.6)]{ISW} that in this case
\[
\Im G_{\mathbf{T}_q}^\lambda(x,y) = \Psi_1(\lambda)\Psi_3(\lambda)\,,
\]
where $\Psi_1(\lambda)=\Im G_{\mathbf{T}_q}^\lambda(o,o) = (-1)^{n+1}s(\lambda)\Psi(w(\lambda))$ as before and $\Psi_3(\lambda)=\Psi_3(\lambda,x,y)$ is given by
\begin{multline}\label{e:psi3}
\Psi_3(\lambda)=\frac{S_{\lambda}(L-x)S_{\lambda}(y)}{s^2(\lambda)}\Phi_k(w(\lambda)) \\
+ \frac{S_{\lambda}(L-x)S_{\lambda}(L-y)+S_{\lambda}(x)S_{\lambda}(y)}{s^2(\lambda)}\Phi_{k-1}(w(\lambda)) + \frac{S_{\lambda}(x)S_{\lambda}(L-y)}{s^2(\lambda)}\Phi_{k-2}(w(\lambda))\,.
\end{multline}

As before, we get
\begin{multline}\label{e:expan3}
\ee^{\ii tH}\mathbf{1}_{ac}(H)(x,y)  \\
=\frac{-\ii}{\pi t}\sum_{n=1}^{\infty}(-1)^n\int_{I_n}\ee^{\ii t \lambda}\Big[\frac{\left\{s(\lambda)\Psi_3(\lambda) \right\}'\Psi(w(\lambda))}{w'(\lambda)} + s(\lambda)\Psi_3(\lambda)\Psi'(w(\lambda))\Big]w'(\lambda)\,\dd\lambda
\end{multline}
and repeat the arguments of \S~\ref{sec:1ed}. In the present case, we will be concerned not only with the $\lambda$-decay (i.e. summability in $n$), but also about having errors uniform in $k$ (uniformity in $x,y\in [0,L]$ is clear from the definition of $\Psi_3$).

\medskip

\textbf{Step 1~:} We study $s'(\phi_n(\theta))\Psi_3(\phi_n(\theta))$, $s(\phi_n(\theta))\Psi_3'(\phi_n(\theta))$ and their $\theta$-derivatives. We already know that the coefficients of $\Phi_m(w(\lambda))$ in \eqref{e:psi3}, consisting of quotients of $S_{\lambda}$ functions, have their $j$-th $\lambda$-derivatives behaving like $O(\lambda^{-j/2})$ for $0\le j\le 2$, see \eqref{e:psi2boun}. At the energy $\lambda=\phi_n(\theta)$, they are thus $O(n^{-j})$. It remains to analyze $\frac{\dd^j}{\dd\lambda^j}\Phi_m(w(\lambda)) = \Phi_m^{(j)}(w(\lambda))w^{(j)}(\lambda)$. More precisely, since we study $\Psi_3(\phi_n(\theta))$, we would like to control $\Phi_m^{(j)}(w(\phi_n(\theta)))w^{(j)}(\phi_n(\theta)) = \Phi_m^{(j)}(2\sqrt{q}\cos\theta)w^{(j)}(\phi_n(\theta))$. But by \eqref{e:sfer},
\[
\Phi_m(2\sqrt{q}\cos\theta)=q^{-m/2}\bigg(\frac{2}{q+1} \cos (m\theta) + \frac{q-1}{q+1}\frac{\sin(m+1)\theta}{\sin \theta}\bigg)\,,
\]
so we clearly have $|\Phi_m(2\sqrt{q}\cos\theta)|\le \frac{q^{-m/2}}{q+1}(2+(q-1)(m+1)) = O(1)$, with $O(1)$ independent of $m$. Similarly, we showed after \eqref{e:cheby2} that $|\Phi_m'(s)|\le \frac{m^2q^{-m/2}}{2\sqrt{q}(q+1)}(2+(q-1)(m+1)) = O(1)$ for any $s\in [-2\sqrt{q},2\sqrt{q}]$, where again we have $O(1)$ independent of $m$ in view of the fast decay in $m$. Finally $\Phi_m''(\lambda) = \frac{q^{-m/2}}{4q(q+1)}\Big(2P_m''\Big(\frac{\lambda}{2\sqrt{q}}\Big)+(q-1)Q_m''\Big(\frac{\lambda}{2\sqrt{q}}\Big)\Big)$. Now $P_m'' = mQ_{m-1}'$, so $|P_m''|\le m^2(m-1)^2$ using \eqref{e:cheby2}. Using \eqref{e:cheby2} again, we also have $|Q_m''|\le (m+1)m^2(m-1)^2$. Thus, $|\Phi_m''(s)|\le \frac{m^4q^{-m/2}}{4q(q+1)}(2+(q-1)(m+1))$, which is still $O(1)$ uniformly in $m$. Recalling our bounds over $w^{(j)}$, it follows that for $j\le 2$,
\begin{equation}\label{e:Fider}
\Phi_m^{(j)}(2\sqrt{q}\cos\theta)w^{(j)}(\phi_n(\theta)) = O(n^{-j})\,,
\end{equation}
uniformly in $m$. Using \eqref{e:psi2boun}, we deduce that
\begin{equation}\label{e:psi3boun}
\Psi_3^{(j)}(\phi_n(\theta)) = O(n^{-j})
\end{equation}
uniformly in $k=d(e_1,e_2)$, for $j\le 2$.

It follows from \eqref{e:sdeca} and \eqref{e:psi3boun} that $s'(\phi_n(\theta))\Psi_3(\phi_n(\theta))$ and $s(\phi_n(\theta))\Psi_3'(\phi_n(\theta))$ are $O(n^{-2})$. Also, $\frac{\dd}{\dd\theta}s'(\phi_n(\theta))\Psi_3(\phi_n(\theta)) = [s''(\phi_n(\theta))\Psi_3(\phi_n(\theta))+s'(\phi_n(\theta)\Psi_3'(\phi_n(\theta))]\phi_n'(\theta)$ is $O(n^{-2})$ if we recall \eqref{e:s''} and the bound $\phi_n'(\theta)=O(n)$. Finally $\frac{\dd}{\dd\theta}s(\phi_n(\theta))\Psi_3'(\phi_n(\theta)) = [s'(\phi_n(\theta))\Psi_3'(\phi_n(\theta))+s(\phi_n(\theta))\Psi_3''(\phi_n(\theta))]\phi_n'(\theta)=O(n^{-2})$ by the same identities. We conclude that the analog of \eqref{e:someeq} is still $O(t^{-1})$, uniformly in $k=d(e_1,e_2)$.

\medskip

\textbf{Step 2~:} We next study $\int_{I_n}\ee^{\ii t\lambda}s(\lambda)\Psi_3(\lambda)\Psi'(w(\lambda))w'(\lambda)\,\dd\lambda$, to which we apply the stationary phase method. Again in the main term in \eqref{e:phasresult}, we just replace $s(a_n)$ and $s(b_n)$ by $s(a_n)\Psi_3(a_n)$ and $s(b_n)\Psi_3(b_n)$, respectively. The error terms are exactly as in the previous subsection, with $\Psi_2$ replaced by $\Psi_3$. Using \eqref{e:psi3boun} instead of \eqref{e:psi2boun}, the same argument shows that these errors are $O(n^{-2})$. We conclude that
\begin{multline*}
\ee^{\ii tH}\mathbf{1}_{ac}(H)(x,y) = \frac{\ii g(0)}{\sqrt{\pi \sqrt{q}}\, t^{3/2}}\sum_{n\ge 1} (-1)^n\bigg[\frac{\ee^{\frac{\ii \pi}{4}}\ee^{\ii a_n t}|w'(a_n)|^{1/2}s(a_n)\Psi_3(a_n)}{2}\\
 - \frac{\ee^{\frac{-\ii \pi}{4}}\ee^{\ii b_n t}|w'(b_n)|^{1/2}s(b_n)\Psi_3(b_n)}{2}\bigg]+O(t^{-2})
\end{multline*}
with $O(t^{-2})$ uniform in $k=d(e_1,e_2)$. This completes the proof of the main result on the kernel (recall that $(-1)^{n+1}s(\lambda)=|s(\lambda)|$ on $I_n$ as mentioned after \eqref{e:psi1}).

\medskip

Finally, the series above is absolutely convergent. In fact, using \eqref{e:sdeca}, \eqref{e:w'} and \eqref{e:psi2boun} or \eqref{e:psi3boun} (with $j=0$) according to the positions of $x,y$, we see the general term is $O(n^{-3/2})$ in absolute value, independently of $t$ or $k$. Consequently, as $t\To\infty$, we have $|\ee^{\ii tH}\mathbf{1}_{ac}(H)(x,y)|\le \frac{C_q}{t^{3/2}}$ uniformly in $x,y\in\mathbf{T}_q$. The dispersive estimate follows (see Remark~\ref{rem:kernel}).

\appendix

\section{Stationary phase result}\label{app:stat}

In this appendix we give an explicit stationary phase estimate. To put things into perspective, we need a stronger statement than the Van der Corput Lemma \cite[Corollary, p.334]{Stein} in the sense that we want an asymptotic $\sim$ for the principal term, but we accept a weaker statement than the full asymptotic given in \cite[Proposition 3, p.334]{Stein} as we only care about the principal term. Our point is to make the remainder explicit in the phase function and observable, with as few derivatives as possible. This is important for our applications for quantum graphs, where we need to apply this for a series of integrals so we have to ensure the series of errors converge. For this we shall use the explicit version of \cite{Olv}.

\begin{thm}
Let $p\in C^1[a,b]$, $q \in C[a,b]$ and suppose $p$ and $q$ admit a Taylor expansion at $x=a$. Assume $x_0=a$ is the only critical point of $p$ in $[a,b]$, so $p'(a)=0$ and $p'(x)\neq 0$ for $x\in (a,b]$. Assume moreover $p''(a)\neq 0$ and let $\eps=\sgn p''(a)$. Then
\[
\int_a^b\ee^{\ii t p(x)}q(x)\,\dd x = \ee^{\ii t p(a)}\ee^{\eps \pi\ii/4}\sqrt{\frac{\pi}{2\eps p''(a)t}}q(a) + \delta_{0,1}(t)-\veps_{0,1}(t)\,,
\]
where
\[
\delta_{0,1}(t) = \int_a^b\ee^{\ii tp(x)}\bigg(q(x)-\frac{\eps q(a)p'(x)}{\sqrt{2\eps p''(a)}\sqrt{\eps(p(x)-p(a))}}\bigg)\,\dd x\,,
\]
\[
\veps_{0,1}(t)=\ee^{\ii tp(a)}\ee^{\eps \pi\ii/4}\Gamma\Big(\frac{1}{2},\ii t\eps(p(a)-\ii p(b))\Big)\frac{q(a)}{\sqrt{2\eps p''(a) t}}
\]
and $\Gamma(\alpha,z) = \int_z^\infty \ee^{-t}t^{\alpha-1}\,\dd t$.
\end{thm}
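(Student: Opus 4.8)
The plan is to flatten the phase by a monotone substitution, turning the integral into a Fresnel-type integral, and then to subtract off in closed form the inverse-square-root singularity that the transformed amplitude develops at the critical endpoint. The stated principal term will come from extending this singular model integral to the half-line, $\delta_{0,1}(t)$ from the regular remainder after subtraction, and $\veps_{0,1}(t)$ from the boundary tail of the half-line integral, which is an incomplete gamma function.

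First I would observe that $p$ is strictly monotone on $[a,b]$: since $p''(a)\neq 0$ and $p'$ has no zero in $(a,b]$, continuity forces $\sgn p'(x)=\eps$ for all $x\in(a,b]$. Hence $u:=\eps\bigl(p(x)-p(a)\bigr)$ is a strictly increasing $C^1$ bijection of $[a,b]$ onto $[0,\beta]$ with $\beta:=\eps\bigl(p(b)-p(a)\bigr)>0$, and, using $\eps^2=1$,
\[
\int_a^b\ee^{\ii tp(x)}q(x)\,\dd x=\ee^{\ii tp(a)}\int_0^\beta\ee^{\ii\eps tu}\,\psi(u)\,\dd u,\qquad \psi(u):=q\bigl(x(u)\bigr)\,\frac{\dd x}{\dd u}.
\]
Next I would analyse $\psi$ near $u=0$. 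Writing $\eps\bigl(p(x)-p(a)\bigr)=(x-a)^2h(x)$, with $h$ smooth near $a$ and $h(a)=|p''(a)|/2>0$ (this uses the Taylor expansion of $p$), the inverse function theorem gives $x(u)-a=\sqrt{2u/|p''(a)|}\,\bigl(1+o(1)\bigr)$ and $\dd x/\dd u=\bigl(2|p''(a)|u\bigr)^{-1/2}\bigl(1+o(1)\bigr)$ as $u\downarrow 0$; combined with the Taylor expansion of $q$, this shows that
\[
r(u):=\psi(u)-\frac{q(a)}{\sqrt{2|p''(a)|}}\,u^{-1/2}
\]
is bounded, hence integrable, on $(0,\beta]$. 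This is where the Taylor hypotheses enter, and it is what makes $\delta_{0,1}(t)$ a well-defined finite quantity rather than a formal expression.

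Now I would split $\int_0^\beta\ee^{\ii\eps tu}\psi(u)\,\dd u$ into the contribution of $\tfrac{q(a)}{\sqrt{2|p''(a)|}}u^{-1/2}$ and that of $r(u)$. Reverting the substitution in the $r$-integral (using $\dd u=\eps p'(x)\,\dd x$, $u^{-1/2}=\bigl(\eps(p(x)-p(a))\bigr)^{-1/2}$ and $|p''(a)|=\eps p''(a)$) shows $\ee^{\ii tp(a)}\int_0^\beta\ee^{\ii\eps tu}r(u)\,\dd u=\delta_{0,1}(t)$. For the singular model integral I would write $\int_0^\beta=\int_0^\infty-\int_\beta^\infty$. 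The half-line piece is the Fresnel value $\int_0^\infty\ee^{\ii\eps tu}u^{-1/2}\,\dd u=\Gamma(\tfrac12)(-\ii\eps t)^{-1/2}=\sqrt{\pi/t}\,\ee^{\eps\pi\ii/4}$, obtained by rotating the contour onto the ray $\arg u=\eps\pi/2$ and invoking Cauchy's theorem together with a vanishing circular-arc estimate (valid because $\Re(\ii\eps tu)\le 0$ on the sector between $\arg u=0$ and $\arg u=\eps\pi/2$); multiplied by $\ee^{\ii tp(a)}q(a)/\sqrt{2|p''(a)|}$ this is exactly the stated principal term. The tail $\int_\beta^\infty\ee^{\ii\eps tu}u^{-1/2}\,\dd u$, after the substitution $\tau=-\ii\eps tu$ and the analogous rotation, becomes $(-\ii\eps t)^{-1/2}\,\Gamma\bigl(\tfrac12,-\ii\eps t\beta\bigr)$; carrying the prefactor $\ee^{\ii tp(a)}q(a)/\sqrt{2|p''(a)|}$ through reproduces $\veps_{0,1}(t)$, and collecting the three pieces gives the displayed identity.

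The only genuinely delicate point is the endpoint analysis of $\psi$ — equivalently, the local inversion of $x\mapsto\sqrt{\eps(p(x)-p(a))}$ near $x=a$ and the resulting boundedness of $r$ — which is precisely where the Taylor/analyticity assumptions on $p$ and $q$ are needed; the Fresnel evaluation, the two contour rotations, and the bookkeeping of prefactors are routine. This is the mechanism behind the explicit stationary phase estimate of \cite{Olv}.
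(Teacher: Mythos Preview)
Your proposal is correct and follows essentially the same route as the paper: both rest on Olver's method, the paper by invoking \cite[Theorem~1]{Olv} with the parameter choice $\lambda=1$, $\mu=2$, $m=0$, $n=1$ and then tracing the sign changes for $\eps=-1$, while you reconstruct the underlying argument directly (flatten the phase by $u=\eps(p(x)-p(a))$, peel off the $u^{-1/2}$ singularity of the transformed amplitude, and identify the Fresnel half-line integral and the incomplete-gamma tail). Your version is in fact the content behind the citation; the only cosmetic difference is that the paper treats the two signs of $p'$ separately, whereas you handle both at once via the factor $\eps$.
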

The statement can be greatly generalized : one can replace $p''(a)\neq 0$ by $p^{(k)}(a)\neq 0$, where $p^{(j)}(a)=0$ for all $j<k$. The function $q(x)$ can also have an algebraic singularity $(x-a)^{-\rho}$, $0\le \rho <1$. Finally higher order precision is also available with explicit terms.
\begin{proof}
First assume $p'(x)>0$ for all $x\in (a,b]$. We apply \cite[Theorem 1]{Olv} with $\lambda=1$, $\mu=2$, $m=0$, $n=1$. Since $p,q$ admit Taylor expansions at $x=a$, we have $p(x)=p(a)+\sum_{s=0}^\infty p_s(x-a)^{s+2}$ and $q(x)=\sum_{s=0}^\infty q_s(x-a)^s$ with $p_0 = \frac{p''(a)}{2}$, $q_0=q(a)$. Let $a_0 = \frac{q_0}{2\sqrt{p_0}}$. Noting that $n>m\mu-\lambda$, we take $\nu=1$ and get
\[
\int_a^b\ee^{\ii t p(x)}q(x)\,\dd x  = \ee^{\ii t p(a)}\ee^{\pi\ii/4}\Gamma\Big(\frac{1}{2}\Big)\frac{a_0}{t^{1/2}}+\delta_{0,1}(t)-\veps_{0,1}(t),
\]
with $\delta_{0,1}(t)= \int_a^b \ee^{\ii t p(x)}Q_{0,1}'(x)\,\dd x$ and $Q_{0,1}(x) = \int_0^xq(y)\,\dd y - \frac{\Gamma(\frac{1}{2})}{\Gamma(\frac{3}{2})}a_0\sqrt{p(x)-p(a)}$, and $\veps_{0,1}(t)=\ee^{\ii tp(a)}\ee^{\pi\ii/4}\Gamma(\frac{1}{2},\ii tp(a)-\ii tp(b))\frac{q_0/2\sqrt{p_0}}{\sqrt{t}}$. The statement follows when $\sgn p''(a)>0$. Indeed, expanding $p'(x)= p''(a)(x-a)(1+O(x-a))$, we see that that $p'(x)>0$ iff $\sgn p''(a)>0$.

\medskip

Now assume $p'(x)<0$ for all $x\in (a,b]$, implying $\sgn p''(a)<0$. As remarked in \cite{Olv}, the theorem remains true by essentially replacing $\ii$ by $-\ii$ through most of the proof. More precisely, here $p(x)$ is decreasing so with the notations of \cite{Olv}, one considers the change of variables $v=p(a)-p(x)$ instead. Then $\int_a^b \ee^{\ii tp(x)}q(x)\,\dd x = \ee^{\ii t p(a)}\int_0^{p(a)-p(b)}\ee^{-\ii t v}f(v)\,\dd v$, with $f(v)=\frac{q(x)}{-p'(x)}$. This shows all $p_s$ get replaced by $-p_s$. Moreover, for (5.4) to hold, we should take $P_j(x)=\left\{\frac{-1}{p'(x)}\frac{\dd}{\dd x}\right\}^j\frac{q(x)}{-p'(x)}$. We take $\beta=p(a)-p(b)$. Lemmas 1,2,3 in \cite[Section 4]{Olv} continue to hold verbatim if we replace $\ii$ by $-\ii$ on both hypothesis and conclusion (e.g. now $\ds \lim_{\eta\downarrow 0}\int_\beta^\infty \ee^{-\eta v}\ee^{-\ii tv}v^{\alpha-1}\,\dd v = \frac{\ee^{-\alpha\pi\ii/2}}{x^\alpha}\Gamma(\alpha,\ii t\beta)$). Returning to $\int_0^\beta\ee^{-\ii tv}f(v)\,\dd v$, we see that $\ii$ should be replaced by $-\ii$ everywhere in (5.8)--(5.11). The same replacement holds for (5.12) and (5.13), except for the terms containing $\veps,\delta$, i.e. we have $\ee^{-\ii tp(a)}\veps_{m,n}(t)$ and $\ee^{-\ii tp(a)}\{\delta_{m,n}(t)-\veps_{m,n}(t)\}$, respectively. (5.14) becomes $\ee^{\ii tp(a)}(\frac{-\ii}{t})^m\int_0^\beta \ee^{-\ii tv}\phi_n^{(m)}(v)\,\dd v$. With our choice of $P_j$, $\phi_n^{(m)}$ has the required form, completing the proof.
\end{proof}

The following corollary is the main tool we use instead of the Van der Corput Lemma, cf. \cite[Corollary, p.334]{Stein}, to obtain sharp estimates.

\begin{cor}\label{cor:phase}
Under the assumptions of the previous theorem, define
\begin{equation}\label{e:q11}
Q_{1,1}(x) = \frac{q(x)}{\eps p'(x)} - \frac{q(a)}{\sqrt{2\eps p''(a)}\sqrt{\eps (p(x)-p(a))}}\,.
\end{equation}
Then
\begin{multline*}
\bigg|\int_a^b\ee^{\ii tp(x)}q(x)\,\dd x - \ee^{\ii t p(a)}\ee^{\eps \pi\ii/4}\sqrt{\frac{\pi}{2|p''(a)|t}}q(a)\bigg| \\ \le \frac{1}{t}\bigg(| Q_{1,1}(a)| + |Q_{1,1}(b)| + V_{a,b}(Q_{1,1}) + \frac{2|q(a)|}{\sqrt{2|p''(a)|}\sqrt{|p(b)-p(a)|}}\bigg)\,,
\end{multline*}
where $V_{a,b}(Q_{1,1}) = \int_a^b|Q_{1,1}'(y)|\,\dd y$ is the total variation of $Q_{1,1}$ over $[a,b]$.
\end{cor}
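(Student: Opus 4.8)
The plan is to read the estimate off the explicit identity in the preceding theorem,
\[
\int_a^b\ee^{\ii tp(x)}q(x)\,\dd x = \ee^{\ii tp(a)}\ee^{\eps\pi\ii/4}\sqrt{\tfrac{\pi}{2\eps p''(a)t}}\,q(a) + \delta_{0,1}(t) - \veps_{0,1}(t),
\]
and to bound $|\delta_{0,1}(t)|$ and $|\veps_{0,1}(t)|$ separately by $t^{-1}$ times the two halves of the claimed error bound. Since $\eps=\sgn p''(a)$ gives $\eps p''(a)=|p''(a)|$, the displayed principal term is exactly the one written with $|p''(a)|$ in the statement, so combining the two bounds with the triangle inequality will finish the proof.

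First I would handle $\delta_{0,1}(t)$. Comparing its integrand with the definition \eqref{e:q11} of $Q_{1,1}$ shows immediately that $q(x)-\frac{\eps q(a)p'(x)}{\sqrt{2\eps p''(a)}\sqrt{\eps(p(x)-p(a))}}=\eps p'(x)\,Q_{1,1}(x)$, hence $\delta_{0,1}(t)=\int_a^b\ee^{\ii tp(x)}\eps p'(x)\,Q_{1,1}(x)\,\dd x$. Using $\eps p'(x)\ee^{\ii tp(x)}=\frac{\eps}{\ii t}\frac{\dd}{\dd x}\ee^{\ii tp(x)}$ and integrating by parts gives
\[
\delta_{0,1}(t)=\frac{\eps}{\ii t}\Big(\big[\ee^{\ii tp(x)}Q_{1,1}(x)\big]_a^b-\int_a^b\ee^{\ii tp(x)}Q_{1,1}'(x)\,\dd x\Big),
\]
whence $|\delta_{0,1}(t)|\le t^{-1}\big(|Q_{1,1}(a)|+|Q_{1,1}(b)|+V_{a,b}(Q_{1,1})\big)$, which is precisely the first three terms. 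The one point that genuinely needs care — and hence the main (if mild) obstacle — is to justify this at the endpoint $x=a$, where each of the two terms in \eqref{e:q11} has a simple pole. Here I would invoke the hypothesis that $p,q$ admit Taylor expansions at $a$: from $p'(x)=p''(a)(x-a)(1+O(x-a))$ and $p(x)-p(a)=\tfrac{p''(a)}{2}(x-a)^2(1+O(x-a))$ one checks that both poles have residue $q(a)/|p''(a)|$ and therefore cancel, so $Q_{1,1}$ extends analytically across $a$; in particular
\begin{equation}\label{e:q11a}
Q_{1,1}(a):=\lim_{x\downarrow a}Q_{1,1}(x)
\end{equation}
is finite, $Q_{1,1}$ is $C^1$ on all of $[a,b]$ (on $(a,b]$ from the regularity of $p,q$, across $a$ from the cancellation above), and $V_{a,b}(Q_{1,1})<\infty$, which legitimises the integration by parts and the boundary term at $a$.

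Finally I would bound $\veps_{0,1}(t)=\ee^{\ii tp(a)}\ee^{\eps\pi\ii/4}\Gamma\big(\tfrac12,z\big)\frac{q(a)}{\sqrt{2\eps p''(a)t}}$, where $z=\ii t\eps(p(a)-p(b))$ as in the theorem's proof. Since $p(a)$ is the unique critical value of $p$ on $[a,b]$ and $\eps=\sgn p''(a)$, we have $p(a)<p(b)$ when $\eps=1$ and $p(a)>p(b)$ when $\eps=-1$; in either case $z$ is purely imaginary with $|z|=t|p(b)-p(a)|$. On the contour $\tau=z+r$, $r\in[0,\infty)$, one then estimates trivially
\[
|\Gamma(\tfrac12,z)|\le\int_0^\infty\ee^{-r}|z+r|^{-1/2}\,\dd r=\int_0^\infty\ee^{-r}\big(|z|^2+r^2\big)^{-1/4}\,\dd r\le|z|^{-1/2},
\]
using $(|z|^2+r^2)^{-1/4}\le|z|^{-1/2}$. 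Hence $|\veps_{0,1}(t)|\le\frac{|q(a)|}{\sqrt{2|p''(a)|}\,t\sqrt{|p(b)-p(a)|}}\le\frac{2|q(a)|}{\sqrt{2|p''(a)|}\,t\sqrt{|p(b)-p(a)|}}$, which is $t^{-1}$ times the last term (with a factor of two to spare). Adding the two estimates via the triangle inequality yields the Corollary; apart from the endpoint analysis of $Q_{1,1}$ needed for \eqref{e:q11a}, every step is a one-line computation.
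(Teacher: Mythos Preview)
Your proof is correct and follows essentially the same route as the paper's, which simply cites Olver's equations (6.3) and (6.7) for the bounds on $\delta_{0,1}$ and $\veps_{0,1}$; you have unpacked exactly those two estimates (integration by parts after recognising the integrand of $\delta_{0,1}$ as $\eps p'(x)Q_{1,1}(x)$, and a direct contour bound on the incomplete Gamma function). Your endpoint analysis of $Q_{1,1}$ at $a$ is also the point the paper addresses right after the corollary, and your $\veps_{0,1}$ bound even saves the factor of $2$.
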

\begin{proof}
Apply \cite[eq. (6.3), (6.7)]{Olv} to the previous theorem.
\end{proof}

\begin{exa}
As is well-known, for any $\alpha\in \R$, the Fresnel integral $\int_0^\infty \ee^{\ii t\alpha x^2}\,\dd x = \frac{\ee^{\eps\pi\ii/4}}{2}\sqrt{\frac{\pi}{|\alpha|\,t}}$, where $\eps = \sgn \alpha$. The previous result tells us that if we cutoff at $A>0$, then
\[
\bigg|\int_0^A\ee^{\ii t\alpha x^2}-\frac{\ee^{\eps\pi\ii/4}}{2}\sqrt{\frac{\pi}{|\alpha|\,t}}\bigg|\le \frac{1}{A|\alpha|\,t}\,.
\]
Indeed, here $Q_{1,1}\equiv 0$.
\end{exa}

In general, we should compute the limit $Q_{1,1}(a)$ carefully. Say $\eps=1$. Then $Q_{1,1}(x) = \frac{q(x)\sqrt{2p''(a)}\sqrt{p(x)-p(a)}-q(a)p'(x)}{p'(x)\sqrt{2p''(a)}\sqrt{p(x)-p(a)}}$. Expanding $p(x)=p(a)+\frac{p''(a)}{2}(x-a)^2+\frac{p'''(a_x)}{6}(x-a)^3$, also $p'(x)=p''(a)(x-a)+\frac{p'''(\tilde{a}_x)}{2}(x-a)^2$ and $q(x) = q(a)+q'(\hat{a}_x)(x-a)$, for some $a_x,\tilde{a}_x,\hat{a}_x\in (a,x)$, the numerator becomes
\begin{multline*}
q(x)\bigg[p''(a)(x-a)\sqrt{1+\frac{p'''(a_x)}{3p''(a)}(x-a)}\bigg]-q(a)p'(x)\\
= \left[q(a)+q'(\hat{a}_x)(x-a)\right]\cdot p''(a)(x-a)\cdot \bigg[1+\frac{p'''(a_x)}{6p''(a)}(x-a)+O(x-a)^2\bigg]\\
-q(a)\Big[p''(a)(x-a)+\frac{p'''(\tilde{a}_x)}{2}(x-a)^2\Big]\\
= q(a)\frac{p'''(a_x)}{6}(x-a)^2+O(x-a)^3+q'(\hat{a}_x)p''(a)(x-a)^2-\frac{q(a)p'''(\tilde{a}_x)}{2}(x-a)^2
\end{multline*}
while the denominator is $[p''(a)(x-a)+O(x-a)^2][p''(a)(x-a)\sqrt{1+O(x-a)}]$. Thus,
\begin{equation}\label{e:q11a}
Q_{1,1}(a) = \frac{q(a)\frac{p'''(a)}{6}+q'(a)p''(a)-\frac{q(a)p'''(a)}{2}}{p''(a)^2} = \frac{q'(a)}{p''(a)}-\frac{q(a)p'''(a)}{3p''(a)^2}\,.
\end{equation}
The same calculation shows that in general $Q_{1,1}(a) = \eps\big(\frac{q'(a)}{p''(a)}-\frac{q(a)p'''(a)}{3p''(a)^2}\big)$.

We thus have in all cases
\begin{multline}\label{e:corprec}
\bigg|\int_a^b\ee^{\ii tp(x)}q(x)\,\dd x - \ee^{\ii t p(a)}\ee^{\eps \pi\ii/4}\sqrt{\frac{\pi}{2|p''(a)|t}}q(a)\bigg| \\ \le \frac{1}{t}\bigg(\Big|\frac{q'(a)}{p''(a)}-\frac{q(a)p'''(a)}{3p''(a)^2}\Big| + \Big|\frac{q(b)}{p'(b)}\Big| + V_{a,b}(Q_{1,1}) + \frac{3|q(a)|}{\sqrt{2|p''(a)|}\sqrt{|p(b)-p(a)|}}\bigg)\,.
\end{multline}

If the only critical point is at $x=b$ instead, then via the change of variables $y=-x$, $\tilde{p}(y)=p(-y)$ and $\tilde{q}(y)=q(-y)$, we see that
\begin{multline}\label{e:end}
\bigg|\int_a^b\ee^{\ii tp(x)}q(x)\,\dd x - \ee^{\ii t p(b)}\ee^{\eps \pi\ii/4}\sqrt{\frac{\pi}{2|p''(b)|t}}q(b)\bigg| \\ \le \frac{1}{t}\bigg(\Big|\frac{q'(b)}{p''(b)}-\frac{q(b)p'''(b)}{3p''(b)^2}\Big| + \Big|\frac{q(a)}{p'(a)}\Big| + V_{a,b}(\widetilde{Q}_{1,1}) + \frac{3|q(b)|}{\sqrt{2|p''(b)|}\sqrt{|p(b)-p(a)|}}\bigg)\,,
\end{multline}
where $\eps = \sgn p''(b)$ and $\widetilde{Q}_{1,1}(x) = \frac{q(x)}{\eps p'(x)} - \frac{q(b)}{\sqrt{2\eps p''(b)}\sqrt{\eps (p(x)-p(b))}}$.

\begin{rem}
We conclude this appendix by comparing our statement (which is just a streamlined account of \cite{Olv}) with some classical references.
\begin{enumerate}[(1)]
\item The well-known Van der Corput lemma \cite[Corollary p.334]{Stein} gives some explicit bound over $\big|\int_a^b\ee^{\ii t p(x)}q(x)\,\dd x \big|$. However, this only yields an upper bound, not an asymptotically equivalent term. Moreover, it requires the additional condition $|p''(x)|\ge 1$ over $[a,b]$.
\item The full asymptotics given in \cite[p.334]{Stein} or \cite[p.41]{Zwor} do not have explicit bounds on the remainder. Inspecting the proof of \cite{Stein}, one first restricts the integral to a neighborhood $N_{\eps}(a)$ of the critical point $a$ such that $\left|\frac{p'''(x)}{3p''(a)}(x-a)\right|<1$. The remainder integral is estimated using integration by parts. This means one needs to control the size of $N_\eps(a)$ and have a lower bound over $p'(x)$ outside $N_\eps(a)$. A similar requirement appears in the proofs of \cite[p.41, p.45]{Zwor}. When such information is available one can expect to control $V_{a,b}(Q_{1,1})$ in \eqref{e:corprec} by $(b-a)\|Q_{1,1}'\|_{\infty}$ efficiently; this is in fact what we did in the discussion following \eqref{e:reducedF}.
\item The methods of \cite{Stein,Zwor} seem more costly in terms of derivatives. After a change of variables $y=T(x)$, where $T$ depends on the phase function $p$, the reduced integral (within the neighborhood) becomes $\int_{N_{\eps}(a)}\ee^{\ii tp(x)} q(x)\,\dd x = \int_{T(N_\eps(a))} \ee^{\ii t \eps y^2} u(y)\,\dd y$, where $u(y)=\frac{q(T^{-1}y)}{|T'(T^{-1}y)|}$. It is now necessary to control the derivatives of $u$. In fact a bound on the error we could extract from \cite[Step 2, p.335]{Stein} with $x\eta(x):=u(x)-u(a)$ is $Ct^{-1}|u|_2$, where $|u|_2 = \max(\|u'\|_\infty,\|u''\|_{\infty})$. The first method of \cite[p.43]{Zwor} is more costly, requiring bounds over $\|u^{(k)}\|_{\infty}$ for $k\le 4$. However, after involved Taylor-Lagrange expansions, one sees that $\|u^{(k)}\|_{\infty}$ is controlled by 
$\ds \max_{\substack{j\le k\\ \ell\le k+3}} \frac{\|q^{(j)}p^{(\ell)}\|_{\infty}}{|p''(a)|^2}$. This means that we need to control at least $5$ derivatives of $p$, and $2$ derivatives of $q$.

The second method in \cite[p.45]{Zwor} seems more costly for the observable. Taking $m=1$ and controlling the error $I_h^{(1)}(0)$ by taking $N=2$ in \cite[p.43]{Zwor}, one finds it necessary to control all derivatives $\frac{\|(g_pq)^{(k)}\|_{\infty}}{|p''(a)|^2}$ for $k\le 6$, where $g_p(x) = p(x)-p''(0)x^2/2$, here $a=0$ and $p(0)=0$.
\end{enumerate}
\end{rem}

\section{Brief comparison with 1d periodic potentials}\label{app:per}

We continue here the discussion started in \S~\ref{sec:q=1}.

For transparency, consider first $H=-\Delta$ on $\R$. The Green's function $G^z_{\R}(x,y)$ for $z\in \C^+$ can be constructed as usual using two semi-$L^2$ ODE solutions. For example take $V_z(x)=\ee^{\ii\sqrt{z}x}\in L^2[0,\infty)$ and $U_z(x)=\ee^{-\ii\sqrt{z}x}\in L^2(-\infty,0]$. Their Wronskian $V_zU_z'-V_z'U_z=-2\ii\sqrt{z}$, so $G^z(x,y) = \begin{cases} \frac{\ee^{\ii\sqrt{z}x}\ee^{-\ii\sqrt{z}y}}{-2\ii\sqrt{z}} & \text{if } y\le x,\\ \frac{\ee^{\ii\sqrt{z}y}\ee^{-\ii\sqrt{z}x}}{-2\ii\sqrt{z}}& \text{if } y\ge x. \end{cases}$ Hence, $G^z(x,y) = \frac{\ee^{\ii\sqrt{z}|x-y|}}{-2\ii\sqrt{z}}$ and $\Im G^{\lambda+\ii 0}(x,y)= \frac{\cos\sqrt{\lambda}|x-y|}{2\sqrt{\lambda}}$ for $\lambda\in [0,\infty)=\sigma(-\Delta)$. By the spectral theorem, $\ee^{\ii tH}(x,y) = \frac{1}{\pi}\int_{\sigma(H)}\ee^{\ii t\lambda}\Im G^{\lambda+\ii 0}(x,y)\,\dd \lambda=\int_0^\infty \frac{\ee^{\ii t\lambda}\cos\sqrt{\lambda}|x-y|}{2\pi \sqrt{\lambda}}\,\dd\lambda = \int_0^\infty \frac{\ee^{\ii t(\lambda+\frac{\sqrt{\lambda}|x-y|}{t})}+\ee^{\ii t (\lambda-\frac{\sqrt{\lambda}|x-y|}{t})}}{4\pi \sqrt{\lambda}}\,\dd\lambda$. Denote the velocity $v=\frac{|x-y|}{t}$ and consider the changes of variables $\sqrt{\lambda}=k$ and $-\sqrt{\lambda}=k$, respectively. Then $\ee^{\ii t H}(x,y) = \int_0^\infty \frac{\ee^{\ii t(k^2+kv)}}{2\pi}\,\dd k + \int_{-\infty}^0\frac{\ee^{\ii t(k^2+kv)}}{2\pi}\,\dd k = \frac{1}{2\pi}\int_{-\infty}^\infty \ee^{\ii t(k^2+kv)}\,\dd k$. This is a Fresnel-type integral, it reduces to $\ee^{\ii tH}(x,y)=\frac{1}{2}\sqrt{\frac{\ii}{\pi t}}\ee^{\frac{-\ii tv^2}{4}}$.

For general periodic Schr\"odinger operators $H$ on $\R$, one simply replaces $\ee^{\pm \ii \sqrt{z}}$ by Floquet solutions. The spectrum generally consists of a number of bands $(I_n)_{n\ge 1}$ of purely absolutely continuous spectrum which may be finite or infinite. A corresponding variable $k$ is defined mapping $I_n$ to bands $\Sigma_n\cup (-\Sigma_{n})=:\Sigma(n)$, and one finds that $\ee^{\ii tH}(x,y) = \ds \sum_n\int_{\Sigma(n)} \ee^{\ii t(E(k)-kv)} X^+(x,k)X^-(x,k)\,\dd k$, here $E(k)$ behaves like $k^2$ away from the band edges and $X^{\pm}$ come from the Floquet solution. See \cite{Fir,Cucca} for more details. This integral is now analyzed using the stationary phase method. It was shown in \cite[Corollary 4.4]{Kor} that for finite bands $\Sigma_n$, $E''(k)$ has a unique zero $k_n\in \Sigma_n$, moreover $E'(k)$ is monotone increasing up to $k_n$, then monotone decreasing. Now consider the phase function $\phi(k) = E(k)-kv$. We have $\phi'(k)=E'(k)-v$ and $\phi''(k)=E''(k)$. The only possibility that $\phi'(k)=\phi''(k)=0$, $k\in\Sigma_n$, is if $k=k_n$ and $v=E'(k_n)$, i.e. for a very specific choice of $v$, hence $x,y,t$. In this situation the stationary phase method allows to conclude that the speed of dispersion slows down to $t^{-1/3}$ (or slower in principle). This problem does not arise on the infinite band. In all other cases the decay will be $t^{-1/2}$, or even faster when $v$ is very large (in that case $\phi'(k)$ does not vanish). See \cite{Fir} for details when the number of bands is finite. The paper \cite{Cucca} considers the case of infinite number of bands, but only provides upper bounds, as it relies on the Van der Corput lemma; it can be an interesting question to test for sharpness by providing asymptotic equivalents as in \cite{Fir}.

Back to our case of quantum trees, the idea of constructing the resolvent kernel from two semi-$L^2$ functions works again, see \cite{Carl,ISW}. The Floquet functions $\ee^{\pm \ii k x}X^{\pm}(x,k)$ are replaced by $(\mu^-(\lambda)^m)V^+_\lambda(x)$ and $(\mu^-(\lambda))^mU^-_{\lambda}(x)$, where $m$ is the distance of $x$ from a fixed edge $b_0$ (think of $b_0=[0,1]$ in $\R$) and $V^+_\lambda,U_\lambda^-$ are fixed functions repeated on all edges (i.e. can be regarded as periodic). The main difference is that the multiplicative factor $\mu^-(\lambda)^m$ decays exponentially in $m$, in fact $|\mu^-(\lambda)^m|=q^{-m/2}$, in contrast to $|\ee^{\ii k m}|=1$ in case of $\R$, and the $\lambda$-variations of $\Im G^{\lambda}_{\mathbf{T}_q}(x,y)$ also decay exponentially with $d(x,y)$. This is in contrast to $\left|\frac{\dd^j}{\dd k^j}\ee^{\ii km}\right| = m^j$ which grows with the distance. These differences make it more reasonable to consider the phase function as $\phi(k)= E(k)$ and keep the analog of $\ee^{\ii k(x-y)}$ in the observable part; the aforementioned control over its modulus and derivatives allows for a good control using the stationary phase method. This is the qualitative reason why we observe a fixed speed of dispersion $t^{-3/2}$ independently of the potentials $W$ and $\alpha$ we put on the edges/vertices.

\providecommand{\bysame}{\leavevmode\hbox to3em{\hrulefill}\thinspace}
\providecommand{\MR}{\relax\ifhmode\unskip\space\fi MR }
\providecommand{\MRhref}[2]{%
  \href{http://www.ams.org/mathscinet-getitem?mr=#1}{#2}
}
\providecommand{\href}[2]{#2}


\begin{thebibliography}{10} 

\bibitem{Abra}
M.~Abramowitz, I.~A.~Stegun, \emph{Handbook of Mathematical Functions. With Formulas, Graphs, and Mathematical Tables}, Tenth Printing, 1972.

\bibitem{AAN}
F.~Ali Mehmeti, K.~Ammari, S.~Nicaise, \emph{Dispersive effects for the Schr\"odinger equation on the tadpole
graph}, J. Math. Anal. Appl. \textbf{448} (2017), 262--280.

\bibitem{AAN2}
F.~Ali Mehmeti, K.~Ammari, S.~Nicaise, \emph{Dispersive effects and high frequency behaviour for the
Schr\"odinger equation in star-shaped networks}, Portugal. Math. \textbf{72} (2015), 309--355.

\bibitem{AISW}
N.~Anantharaman, M.~Ingremeau, M.~Sabri, B.~Winn, \emph{Absolutely continuous spectrum for quantum trees}, Comm. Math. Phys. \textbf{383} (2021), 537--594.

\bibitem{AISW2}
N.~Anantharaman, M.~Ingremeau, M.~Sabri, B.~Winn, \emph{Empirical spectral measures of quantum graphs in the Benjamini-Schramm limit}, J. Funct. Anal. \textbf{280} (2021), 108988.

\bibitem{AS}
N.~Anantharaman, M.~Sabri, \emph{Poisson kernel expansions for Schr\"odinger operators on trees}, J. Spectr. Theory. \textbf{9} (2019), 243--268.

\bibitem{AS2}
N.~Anantharaman, M.~Sabri, \emph{Recent results of quantum ergodicity on graphs and further investigation}, Ann. Fac. Sci. Toulouse Math. \textbf{28} (2019), 559--592.

\bibitem{ABS}
N.~Avni, J.~Breuer, B.~Simon, \emph{Periodic Jacobi matrices on trees}, Adv. Math. \textbf{370} (2020), 107241. 

\bibitem{BI}
V.~Banica, L.~I.~Ignat, \emph{Dispersion for the Schr\"odinger equation on the line with multiple Dirac delta potentials and on delta trees}, Analysis and PDE. \textbf{7} (2014), 903--927. 

\bibitem{Cai}
K.~Cai, \emph{Dispersion for Schr\"odinger Operators with One-gap Periodic Potentials on $\R^1$}, Dynamics of PDE. {\bf 3} (2006), 71--92.

\bibitem{Carl}
R.~Carlson, \emph{Hill's equation for a homogeneous tree}, Electron. J. Differential Equations. \textbf{23} (1997), 1--30.

\bibitem{CdV98}
Y.~Colin de Verdi\`ere, \emph{Spectres de Graphes}, Soci\'et\'e Math\'ematique de France, Paris, 1998.

\bibitem{Cucca2}
S.~Cuccagna, \emph{Stability of standing waves for NLS with perturbed Lam\'e potential}, J. Diff. Eq. \textbf{223} (2006), 112--160.

\bibitem{Cucca}
S.~Cuccagna, \emph{Dispersion for Schr\"odinger Equation with Periodic Potential in 1D}, Comm. Part. Differ. Equat. \textbf{33}  (2008), 2064--2095.

\bibitem{East}
M.~S.~P.~Eastham, \emph{The Spectral Theory of Periodic Differential Equations}, Scottish Academic Press, 1973.

\bibitem{Fir}
N.~E.~Firsova, \emph{On the time decay of a wave packet in a one-dimensional finite band periodic lattice}, J. Math. Phys. \textbf{37} (1996), 1171–-1181.

\bibitem{HMMS}
D.~Hundertmark, L.~Machinek, M.~Meyries, R.~Schnaubelt, \emph{Operator Semigroups and Dispersive Equations}, 16th Internet Seminar on Evolution Equations, Lecture notes, 2013.

\bibitem{ISW}
M.~Ingremeau, M.~Sabri, B.~Winn, \emph{Quantum ergodicity for large equilateral quantum graphs}, J. London Math. Soc. {\bf 101} (2020), 82--109.

\bibitem{KS}
T.~Kawarabayashi, M.~Suzuki, \emph{Decay rate of the Green function in a random potential on the
Bethe lattice and a criterion for localization}, J. Phys. A. Math. Gen. \textbf{26} (1993), 5729--5750.

\bibitem{KLW}
M.~Keller, D.~Lenz, S.~Warzel, \emph{On the spectral theory of trees with finite cone type}, Israel J. Math. \textbf{194} (2013), 107--135.

\bibitem{Klein}
A.~Klein, \emph{Extended states in the Anderson model on the Bethe lattice}, Adv. Math. \textbf{133} (1998), 163--184.

\bibitem{Kra}
I.~Krasikov, \emph{Approximations for the Bessel and Airy functions with an explicit error term}, LMS J. Comput. Math. \textbf{17} (2014) 209--225

\bibitem{Kor}
E.~Korotyaev, \emph{The propagation of the waves in periodic media at large time}, Asymptotic Analysis. \textbf{15} (1997), 1--24.

\bibitem{Lan}
L.~J.~Landau, \emph{Bessel Functions: Monotonicity and Bounds}, J. London Math. Soc. \textbf{61} (2000) 197--215.

\bibitem{Olen}
A.~Ya.~Olenko, \emph{Upper bound on $\sqrt{x}J_\nu(x)$ and its applications}, Integral Transforms Spec. Funct. \textbf{17} (2006) 455--467.

\bibitem{Olv}
F.~W.~J.~Olver, \emph{Error bounds for stationary phase approximations}, SIAM J. Math. Anal. \textbf{5} (1974), 19--29.

\bibitem{Par}
L.~Parnovski, \emph{Bethe-Sommerfeld conjecture} Ann. Henri Poincaré \textbf{9} (2008), 457--508.

\bibitem{PT87}
J.~P\"oschel, E.~Trubowitz, \emph{Inverse Spectral Theory}, Academic Press, 1987.

\bibitem{SimSchro}
B.~Simon, \emph{Spectral analysis of rank one perturbations and applications}, in Mathematical Quantum
Theory. II. Schr\"odinger Operators (Vancouver, BC, 1993), CRM Proc. Lecture Notes, 8, Amer. Math. Soc., Providence, RI, 1995.	

\bibitem{SteKev}
A.~Stefanov, P.~G.~Kevrekidis, \emph{Asymptotic behaviour of small solutions for the discrete nonlinear Schr\"odinger and Klein–Gordon equations}, Nonlinearity \textbf{18} (2005), 1841--1857.

\bibitem{Stein}
E.~M.~Stein, \emph{Harmonic Analysis: Real-Variable Methods, Orthogonality, and Oscillatory Integrals}, Princeton University Press, Princeton, NJ, 1993.

\bibitem{T}
T.~Tao, \emph{Nonlinear dispersive equations: local and global analysis}, CBMS Regional Conference Series in Mathematics \textbf{106}, AMS 2006.

\bibitem{Teschl}
G.~Teschl, \emph{Mathematical Methods in Quantum Mechanics. With Applications to Schr\"odinger Operators}, Second Edition, Graduate Studies in Mathematics, 157. American Mathematical Society, Providence, RI, 2014.

\bibitem{Vel}
O.A.~Veliev, \emph{Perturbation theory for the periodic multidimensional Schr\"odinger operator and the Bethe-Sommerfeld Conjecture}, Int. J. Contemp. Math. Sci. \textbf{2} (2007), 19--87 .

\bibitem{Zwor}
M.~Zworski, \emph{Semiclassical Analysis}, Graduate Studies in Mathematics, 138. American Mathematical Society, Providence, RI, 2012.

\end{thebibliography}
\end{document}